\newtheorem{lemma}{Lemma}
\newtheorem{definition}{Definition}
\newtheorem{approximation}{Approximation}
\def\BibTeX{{\rm B\kern-.05em{\sc i\kern-.025em b}\kern-.08em
    T\kern-.1667em\lower.7ex\hbox{E}\kern-.125emX}}
\begin{document}
\title{Geometry of Distance Protection}
\author{Joshua A. Taylor, \IEEEmembership{Senior Member, IEEE}, Alejandro D. Dom\'inguez-Garc\'ia, \IEEEmembership{Fellow, IEEE}
\thanks{This work was supported in part by the National Science Foundation under Grant 2411925.}
\thanks{Josh A. Taylor is with the Department of Electrical and Computer Engineering,
       New Jersey Institute of Technology, Newark, NJ, USA. E-mail:
        {\tt\small jat94@njit.edu}}
\thanks{Alejandro D. Dom\'inguez-Garc\'ia is with the Department of Electrical and Computer Engineering, University of Illinois at 
	Urbana-Champaign, Urbana, IL, USA. E-mail:
        {\tt\small aledan@illinois.edu}}
}

\maketitle

\begin{abstract}
Distance relays detect faults on transmission lines. They face uncertainty from the fault's location and resistance, as well as the current from the line's remote terminal. In this paper, we aggregate this uncertainty with the Minkowski sum. This allows us to explicitly model the power grid surrounding the relay's line, and in turn accommodate any mix of synchronous machines and inverter-based resources. To make the relay's task easier, inverters can inject perturbations, or auxiliary signals, such as negative-sequence current. We use Farkas' lemma to construct an optimization for designing inverter auxiliary signals.
\end{abstract}

\begin{IEEEkeywords}
Distance protection, Minkowski sum, zonotope, auxiliary signal, Farkas' lemma, inverter-based resource.
\end{IEEEkeywords}

\section{Introduction}\label{sec:intro}

Faults in power systems are, for the most part, unintentional short circuits. Common causes include lightning, trees, and animals. The distance or phasor-based relay is an established technology for detecting faults on transmission lines. It functions by measuring the local voltage and current and computing an apparent impedance to the fault. From this it determines if there is a fault on its line, in which case it should trip a circuit breaker. We refer the reader to~\cite{ziegler2011numerical} and~\cite{horowitz2022power} for background on distance protection.

In addition to the fault's location, a distance relay faces uncertainty from the fault's resistance and any current flowing into the line's remote terminal, which it cannot measure. To accommodate this uncertainty, the relay responds when the apparent impedance is in a certain region of the complex plane, which is known as a characteristic. A sophisticated type of characteristic is the quadrilateral, which is intuitively formed by adding the uncertainties of the fault's location and resistance and the remote current. The remote current, which depends on the power grid surrounding the relay's line, is incorporated via a physically motivated, ad hoc adjustment. Our first main contribution is the formalization of this accounting in terms of the Minkowski sum~\cite{ziegler1995lectures}.

The characteristics of distance relays---particularly the remote-current adjustment---are based on the assumption that the power sources are synchronous generators (SGs). There is now ample evidence that distance relays can misoperate in grids with significant amounts of inverter-based resources (IBRs)~\cite{kasztenny2022distance,baeckeland2022distance,protection2024gap}. By casting the relay characteristics as Minkowski sums, we are able to explicitly model the network and sources, which allows us to accommodate any combination of SGs and IBRs.

Another remedy for relay misoperation is for the IBRs to inject perturbations such as harmonics~\cite{saleh2020protection} and negative-sequence current~\cite{banaiemoqadam2019control}. This is now a part of some grid codes; e.g., IEEE Standard 2800~\cite{IEEE28002022} stipulates that each IBR inject negative-sequence current that leads its negative-sequence terminal voltage by $90^\circ$. Perturbations that aid fault and failure detection are also known as auxiliary signals~\cite{campbell2015auxiliary}. Our second main contribution is an optimization for designing generic auxiliary signals for IBRs. We construct the optimization using Farkas' lemma~\cite{Schrijver1998LPIP} and by building on our prior work~\cite{taylor2024fault}.

In Section~\ref{sec:modeling}, we present basic models of the power grid, IBR auxiliary signals, and the fault loops of the eleven three-phase faults (all of which are captured in two analyses). We treat the power grid as a lumped circuit in which SGs are voltage sources and IBRs current sources. While this last assumption has some precedent~\cite{banaiemoqadam2019control}, it is crude, and could in principle be replaced by another approximation so long as it is linear or convex.

In Section~\ref{sec:uncertainty}, we model uncertainty from the fault's location and resistance and the remote terminal current. We also qualitatively discuss intermediate infeeds~\cite{ziegler2011numerical}. We distinguish between \textit{pre-test} uncertainty, prior to measuring the voltage and current, and \textit{post-test} uncertainty. The auxiliary signal must be designed offline, pre-test. Post-test, the relay has observed the local voltage and current and must determine if there is a fault on its line. Both pre- and post-test, we aggregate the uncertainty via the Minkowski sum.

The post-test uncertainty sets contain all possible apparent impedances the relay can see. As such, we regard them as a new type of relay characteristic. Absent intermediate and remote infeeds, the post-test uncertainty sets are equivalent to quadrilateral characteristics. It is, however, through these sources of uncertainty that IBRs create problems for quadrilateral characteristics~\cite{banaiemoqadam2019control,baeckeland2022distance}, and which we precisely account for with the Minkowski sum.

To use the post-test uncertainty sets as characteristics, we must project them onto the space of impedances, i.e., the complex plane. In Section~\ref{sec:projection}, we show that if we model the SG and IBR uncertainties as zonotopes~\cite{ziegler1995lectures}, then much of this structure carries through, and the most intensive computation is the gift wrapping algorithm~\cite{jarvis1973identification}. We also give a zonogon approximation with analytical form in Section~\ref{sec:zonogon}.

The pre-test uncertainty sets may be left in high dimensions, but have bilinear products of uncertain variables. In Section~\ref{sec:convex}, we derive several convex approximations. We are ultimately concerned with the intersections of pairs of pre-test uncertainty sets, e.g., normal operation and phase a-to-phase b faults, or phase a-to-phase b and phase a-to-ground faults. If the intersection between a pair of pre-test uncertainty sets is empty, then there is no possible observation the could be consistent with the two corresponding scenarios. If true for all pairs of scenarios, the relay will, in theory, never face an ambiguous decision.

If such an intersection is not already empty, it can be made to be empty via an auxiliary signal, which we otherwise want to be as small as possible. In Section~\ref{sec:auxiliary}, we use Farkas' lemma~\cite{Schrijver1998LPIP} to obtain dual systems that are feasible exactly when the corresponding intersections are empty. This enables us to optimize auxiliary signals subject to the constraint that the relay never faces an ambiguous decision. Our construction largely follows our earlier work~\cite{taylor2024fault}, which builds on the formulations of~\cite{campbell2015auxiliary}. The optimization is a bilinear program, for which we give an algorithm based on the Alternating Direction Method of Multipliers~\cite{boyd2011distributed} in Appendix~\ref{app:admm}.

We give examples of post-test uncertainty sets in Section~\ref{sec:example:postest} and auxiliary signal design in Section~\ref{sec:example:AS}. All computations were carried out in Python using open-source tools.

\section{Physical model}\label{sec:modeling}

\subsection{Network}
The voltage and current at bus $k$ are 
\[
v_{k}=\begin{bmatrix}
v_{k}^{\textrm{a}}\\
v_{k}^{\textrm{b}}\\
v_{k}^{\textrm{c}}
\end{bmatrix}\in\mathbb{C}^3
\quad\textrm{and}\quad
i_{k}=\begin{bmatrix}
i_{k}^{\textrm{a}}\\
i_{k}^{\textrm{b}}\\
i_{k}^{\textrm{c}}
\end{bmatrix}\in\mathbb{C}^3.
\]
Let $v\in\mathbb{C}^{3n}$ and $i\in\mathbb{C}^{3n}$ be the stacked three-phase voltage and current injection vectors for all $n$ buses. Let $\mathcal{S}$ and $\mathcal{C}$ be the subsets of buses with SGs and IBRs, respectively, and let subscripted $\mathcal{S}$ and $\mathcal{C}$ indicate the corresponding subvector, e.g., $v_{\mathcal{S}}$ is the subvector of voltages at buses with SGs.

Consider a symmetrical, fully transposed line in the network with positive-sequence series impedance $z$. A relay is at the line's local bus, $\textrm{L}$. The remote bus is denoted $\textrm{R}$. The relay measures the local bus voltages, $v_{\textrm{L}}$, and currents flowing into the line from the local bus, $i_{\textrm{L}}$. $v_{\textrm{R}}$ and $i_{\textrm{R}}$ are the analogous quantities at the remote side, where $i_{\textrm{R}}$ also flows into the line.

Suppose a fault occurs somewhere in the network. The fault's resistance is $m_rr_{\textrm{F}}$, where $m_r\in\left[0,1\right]$ is its normalized resistance and $r_{\textrm{F}}>0$. The impedance of the portion of the line between the relay and the fault is $m_zz$, where $m_z\in\mathbb{R}$ is the normalized distance. There is a fault on the line if $m_z\in[0,1]$. Here, the relay tries to determine if $m_z\in[\underbar{m}_z,1]$ for some $\underbar{m}_z\in[0,1)$. `Close-in' faults for which $m_z\in[0,\underbar{m}_z)$ are typically handled as exceptions, e.g., via dead zones~\cite{ziegler2011numerical}.

Let $\alpha=e^{j2\pi/3}$,
\[
A=\begin{bmatrix}
1&1&1\\
1&\alpha^2&\alpha\\
1&\alpha&\alpha^2
\end{bmatrix},\textrm{ and }
B=A^{-1}=\frac{1}{3}\begin{bmatrix}
1&1&1\\
1&\alpha&\alpha^2\\
1&\alpha^2&\alpha
\end{bmatrix}.
\]
The symmetrical components of the voltage at bus $k$ are
\[
\begin{bmatrix}
v_k^0\\
v_k^+\\
v_k^-
\end{bmatrix} = Bv_k.
\]
Symmetrical components of currents are similarly defined.

To extract individual phase information, we will sometimes use the canonical vectors
\begin{align*}
e^{\textrm{a}}&=\begin{bmatrix}1&0&0\end{bmatrix},\;e^{\textrm{b}}=\begin{bmatrix}0&1&0\end{bmatrix},\;e^{\textrm{c}}=\begin{bmatrix}0&0&1\end{bmatrix},
\end{align*}
and for zero-sequence components,
\[
e^0=\frac{1}{3}\begin{bmatrix}1&1&1\end{bmatrix}.
\]

\subsection{Auxiliary signals}

In this paper, the auxiliary signal consists of perturbations to the IBR currents. Let $\Delta$ be the vector of auxiliary signals. The IBR currents, $i_{\mathcal{C}}$, are affine functions of $\Delta$. We will write $i_{\mathcal{C}}(\Delta)$ to make the dependence explicit starting in Section~\ref{sec:protection}.

A good choice is $\Delta=i_{\mathcal{C}}^-$, negative-sequence current, so that for $k\in\mathcal{C}$,
\[
i_k(\Delta_k) = A\begin{bmatrix}0\\i_k^+\\\Delta_k
\end{bmatrix}.
\]
Some convenient features of negative-sequence current are that, unlike, e.g., harmonics, positive and negative sequence impedances are the same if the network is symmetrical, simplifying modeling; and negative-sequence currents are not attenuated by high impedances.


Other choices include harmonic currents (which entails modeling the system at harmonic frequencies), zero and positive-sequence currents, and per-phase current injections, all of which can be represented as affine functions of $\Delta$.

We might also include convex constraints on the auxiliary signal, e.g., that the current magnitudes be within the IBRs' limits:
\begin{align*}
\left\|i_k(\Delta_k) \right\|_2 \leq i_k^{\max},\quad k\in\mathcal{C}.
\end{align*}

\subsection{Fault loops}\label{sec:faultloops}

A relay must determine if there is a fault on its line and, if so, which type. It must distinguish between twelve scenarios: normal operation (N), and the eleven types of faults---line-to-ground (LG), line-to-line (LL), double line-to-ground (LLG), and the symmetrical faults, abc and abcg. Each scenario has a model, which we index with the elements of the set
\[
\mathbb{F} = \{
\underset{\textrm{LG}}{\underbrace{\textrm{ag},\textrm{bg},\textrm{cg}}},
\underset{\textrm{LL}}{\underbrace{\textrm{ab},\textrm{ac},\textrm{bc}}},
\underset{\textrm{LLG}}{\underbrace{\textrm{abg},\textrm{acg},\textrm{bcg}}},
\underset{\textrm{symmetrical}}{\underbrace{\textrm{abc},\textrm{abcg}}},\textrm{N}
\}.
\]
Let
\[
\mathbb{F}_2=\mathbb{F}\times\mathbb{F}\setminus\left\{(\eta,\eta)\;|\;\eta\in\mathbb{F}\right\},
\]
i.e., the Cartesian product of $\mathbb{F}$ with itself, excluding duplicates. We indicate the scenario with a superscript.

The relay infers which model in $\mathbb{F}$, and thus which scenario, is true from its measurements, $v_{\textrm{L}}$ and $i_{\textrm{L}}$. It relates these measurements to the fault by computing KVL around the fault loop, which is the path from the relay to the fault and back~\cite{ziegler2011numerical}. Because the LG and LL fault loops are present for all of the other types of faults, we can use their analyses for all fault types~\cite{kasztenny2008fundamentals}. We assume a `non-switched' relay that continuously samples the voltage and current in all three phases, $v_{\textrm{L}}$ and $i_{\textrm{L}}$~\cite{ziegler2011numerical}.

For each scenario $\eta\in\mathbb{F}$, from its measurements, the relay computes an apparent voltage drop, $v_{\textrm{A}}^{\eta}$, current, $i_{\textrm{A}}^{\eta}$, and impedance, $z_{\textrm{A}}^{\eta}=v_{\textrm{A}}^{\eta}/i_{\textrm{A}}^{\eta}$. For example, for an ab fault,
\begin{align*}
v_{\textrm{A}}^{\textrm{ab}} &= v_{\textrm{L}}^{\textrm{a}}-v_{\textrm{L}}^{\textrm{b}}\quad\textrm{and}\quad i_{\textrm{A}}^{\textrm{ab}} = i_{\textrm{L}}^{\textrm{a}}-i_{\textrm{L}}^{\textrm{b}}.
\end{align*}
For each $\eta\in\mathbb{F}$, we let $\psi^{\eta}$ denote the mapping from $v_{\textrm{L}}$ to $v_{\textrm{A}}^{\eta}$; e.g., $\psi^{\textrm{ab}}=[1,-1,0]$.

For all fault types, KVL has two parts: the voltage drops due to the transmission line, $v_{\textrm{T}}$, and due to the fault, $v_{\textrm{F}}$. Dividing through by $i_{\textrm{A}}$, we obtain the impedance due to the line, $z_{\textrm{T}}=m_zz$, and the apparent impedance due to the fault, $z_{\textrm{F}}$.
Then
\begin{subequations}
\label{eq:KVL}
\begin{align}
v_{\textrm{A}} &= v_{\textrm{T}} + v_{\textrm{F}}\label{eq:KVLv}\\
z_{\textrm{A}} &= z_{\textrm{T}}+ z_{\textrm{F}}.\label{eq:KVLz}
\end{align}
\end{subequations}
In all cases, $v_{\textrm{F}}=m_rr_{\textrm{F}}i_{\textrm{F}}$, where $i_{\textrm{F}}$ is the current through the fault. 

If the fault is bolted, i.e., $m_r=0$, then $v_{\textrm{F}}=z_{\textrm{F}}=0$, and the fault's location is simply $m_z=z_{\textrm{A}}/z$. If $m_r>0$, but there is no remote infeed, i.e., $i_{\textrm{R}}=0$, then $z_{\textrm{F}}=m_rr_{\textrm{F}}$. In this case, we can take the imaginary part of (\ref{eq:KVLz}) to obtain $m_z=x_{\textrm{A}}/x$~\cite{ziegler2011numerical}. The relay, however, cannot assume that $m_r=0$ or $i_{\textrm{R}}=0$, and so must allow for uncertainty in $v_{\textrm{F}}$ and $z_{\textrm{F}}$.

In this section, we specify (\ref{eq:KVL}) for normal operation and ag and ab faults, which we use as nominal instances of LG and LL faults. In Section~\ref{sec:uncertainty}, we compute corresponding uncertainty sets using the Minkowski sum.

\subsubsection{LG faults}\label{sec:lg}

Consider a fault from phase a to ground, as in Fig.~\ref{fig:ag}.
\begin{figure}[h]
\begin{center}
\begin{circuitikz}
	\draw (0,0) node[above]{$v_{\textrm{L}}^{\textrm{a}}$} to [open, *-*] (8,0) node[above]{$v_{\textrm{R}}^{\textrm{a}}$};
	\draw (0,0) to [generic=$m_zz$ , i>_=$i_{\textrm{L}}^{\textrm{a}}$] (4,0)
	to [generic=$(1-m_z)z$ ,  i_<=$i_{\textrm{R}}^{\textrm{a}}$] (8,0);
  	\draw (4,0) to [/tikz/circuitikz/bipoles/length=20pt,R=$m_rr_{\textrm{F}}$,  i_>=$i_{\textrm{F}}$] (4,-1.5)
	to (4,-1.5) node[ground]{};
	\draw (4,0) to [open, *-] (4,-1.5);
	\draw (0,1.5) node[above]{$v_{\textrm{L}}^{\textrm{b}}$} to [open, *-*, i_<=$i_{\textrm{R}}^{\textrm{b}}$] (8,1.5) node[above]{$v_{\textrm{R}}^{\textrm{b}}$};
	\draw (0,1.5) to [generic=$z$ , i>_=$i_{\textrm{L}}^{\textrm{b}}$] (8,1.5);
	\draw (0,3) node[above]{$v_{\textrm{L}}^{\textrm{c}}$} to [open, *-*, i_<=$i_{\textrm{R}}^{\textrm{c}}$] (8,3) node[above]{$v_{\textrm{R}}^{\textrm{c}}$};
	\draw (0,3) to [generic=$z$ , i>_=$i_{\textrm{L}}^{\textrm{c}}$] (8,3);
\end{circuitikz}
\end{center}
\caption{Circuit representation of an ag fault. The fault loop is from phase a of the local bus to ground.}
\label{fig:ag}
\end{figure}
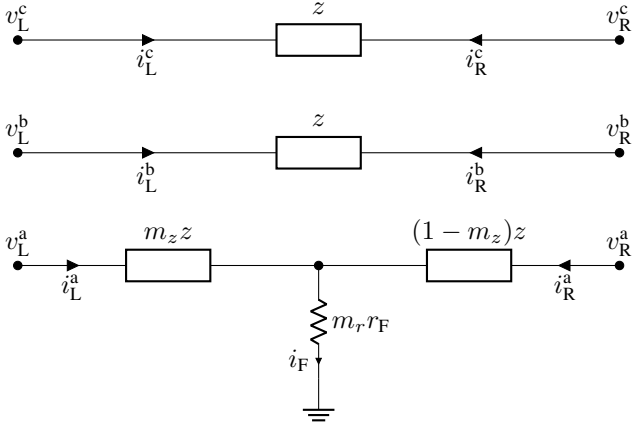
Let $z^0$ be the zero-sequence impedance of the line and $k = z^0/z - 1$ the zero-sequence compensation factor.\footnote{Line-to-ground fault currents typically have zero-sequence content for which we must use zero-sequence impedances~\cite{ziegler2011numerical}. This is not the case for line-to-line faults.} We set
\begin{align*}
v_{\textrm{A}}^{\textrm{ag}} &= v_{\textrm{L}}^{\textrm{a}}, \quad i_{\textrm{A}}^{\textrm{ag}} = i_{\textrm{L}}^{\textrm{a}}+ki_{\textrm{L}}^0,
\end{align*}
where $i_{\textrm{L}}^0=e^0i_{\textrm{L}}$. Here $\psi^{\textrm{ag}}=[1,0,0]$. KVL from the relay to ground gives
\begin{subequations}
\begin{align}
v_{\textrm{L}}^{\textrm{a}}&= z_{\textrm{A}}^{\textrm{ag}}\left(i_{\textrm{L}}^{\textrm{a}}+ki_{\textrm{L}}^0\right)\\
 &=  \underset{=v_{\textrm{T}}^{\textrm{ag}}}{\underbrace{ m_z z \left(i_{\textrm{L}}^{\textrm{a}}+ki_{\textrm{L}}^0\right)}} +  \underset{=v_{\textrm{F}}^{\textrm{ag}}}{\underbrace{m_r r_{\textrm{F}} \left(i_{\textrm{L}}^{\textrm{a}}+ i_{\textrm{R}}^{\textrm{a}}\right)}}.\label{eq:lgkvl}
\end{align}
Dividing through by $i_{\textrm{L}}^{\textrm{a}}+ki_{\textrm{L}}^0$, we have
\begin{align}
z_{\textrm{A}}^{\textrm{ag}} &=  \underset{=z_{\textrm{T}}}{\underbrace{m_z z}}  +  \underset{=z_{\textrm{F}}^{\textrm{ag}}}{\underbrace{m_r r_{\textrm{F}} \frac{i_{\textrm{L}}^{\textrm{a}}+ i_{\textrm{R}}^{\textrm{a}}}{i_{\textrm{L}}^{\textrm{a}}+ki_{\textrm{L}}^0}}}.\label{eq:lgkvlz}
\end{align}
\end{subequations}

\subsubsection{LL faults}\label{sec:ll}
Consider a fault between phases $\textrm{a}$ and $\textrm{b}$, as in Fig.~\ref{fig:ab}.
\begin{figure}[h]
\begin{center}
\begin{circuitikz}
	\draw (0,0) node[above]{$v_{\textrm{L}}^{\textrm{a}}$} to [open, *-*] (8,0) node[above]{$v_{\textrm{R}}^{\textrm{a}}$};
	\draw (0,0) to [generic=$m_zz$ , i>_=$i_{\textrm{L}}^{\textrm{a}}$] (4,0)
	to [generic=$(1-m_z)z$ ,  i_<=$i_{\textrm{R}}^{\textrm{a}}$] (8,0);
	\draw (0,1.5) node[above]{$v_{\textrm{L}}^{\textrm{b}}$} to [open, *-*] (8,1.5) node[above]{$v_{\textrm{R}}^{\textrm{b}}$};
	\draw (0,1.5) to [generic=$m_zz$ , i>_=$i_{\textrm{L}}^{\textrm{b}}$] (4,1.5)
	to [generic=$(1-m_z)z$ ,  i_<=$i_{\textrm{R}}^{\textrm{b}}$] (8,1.5);
	\draw (0,3) node[above]{$v_{\textrm{L}}^{\textrm{c}}$} to [open, *-*, i_<=$i_{\textrm{R}}^{\textrm{c}}$] (8,3) node[above]{$v_{\textrm{R}}^{\textrm{c}}$};
	\draw (0,3) to [generic=$z$ , i>_=$i_{\textrm{L}}^{\textrm{c}}$] (8,3);
\draw (4,0) to [/tikz/circuitikz/bipoles/length=20pt,R=$m_rr_{\textrm{F}}$,  i>_=$i_{\textrm{F}}$] (4,1.5)
	to (4,1.5);
	\draw (4,0) to [open, *-*] (4,1.5);
\end{circuitikz}
\end{center}
\caption{Circuit representation of an ab fault. The fault loop is from phase a to phase b of the local bus.}
\label{fig:ab}
\end{figure}
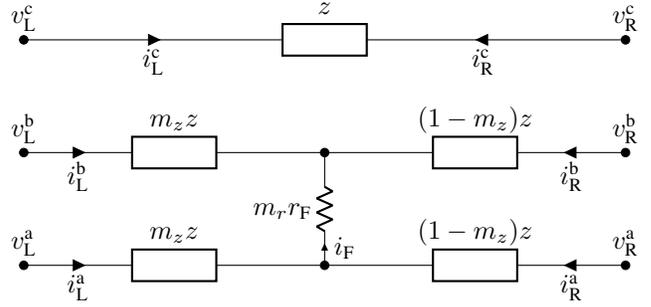
We set
\begin{align*}
v_{\textrm{A}}^{\textrm{ab}} &= v_{\textrm{L}}^{\textrm{a}}-v_{\textrm{L}}^{\textrm{b}},\quad i_{\textrm{A}}^{\textrm{ab}} = i_{\textrm{L}}^{\textrm{a}}-i_{\textrm{L}}^{\textrm{b}}.
\end{align*}
Here $\psi^{\textrm{ab}}=[1,-1,0]$. KVL gives
\begin{subequations}
\begin{align}
v_{\textrm{L}}^{\textrm{a}}-v_{\textrm{L}}^{\textrm{b}}&=z_{\textrm{A}}^{\textrm{ab}}\left(i_{\textrm{L}}^{\textrm{a}}-i_{\textrm{L}}^{\textrm{b}}\right) \\
&= \underset{=v_{\textrm{T}}^{\textrm{ab}}}{\underbrace{m_zz\left(i_{\textrm{L}}^{\textrm{a}}-i_{\textrm{L}}^{\textrm{b}}\right)}} + \underset{=v_{\textrm{F}}^{\textrm{ab}}}{\underbrace{\frac{m_rr_{\textrm{F}}}{2} \left(i_{\textrm{L}}^{\textrm{a}} -i_{\textrm{L}}^{\textrm{b}} + i_{\textrm{R}}^{\textrm{a}}- i_{\textrm{R}}^{\textrm{b}}\right)}}.\label{eq:llkvl}
\end{align}
Dividing through, we have
\label{eq:zab}
\begin{align}
z_{\textrm{A}}^{\textrm{ab}}&= \underset{=z_{\textrm{T}}}{\underbrace{m_zz}}+ \underset{=z_{\textrm{F}}^{\textrm{ab}}}{\underbrace{\frac{m_rr_{\textrm{F}}}{2} \left(1 + \frac{i_{\textrm{R}}^{\textrm{a}}- i_{\textrm{R}}^{\textrm{b}}}{i_{\textrm{L}}^{\textrm{a}}-i_{\textrm{L}}^{\textrm{b}}}\right)}}.\label{eq:zab1}
\end{align}
\end{subequations}

\subsubsection{LLG and symmetrical faults}\label{sec:llg}

LLL faults contain LL fault loops, and LLG and LLLG faults both contain LL and LG fault loops, and so can be modeled as in the previous two sections. LLG faults, for instance, can take several forms, depending on if there are resistances between the phases and to ground. The resistance between the phases is typically smaller than that to ground~\cite{ziegler2011numerical}; if we assume it is zero, the analysis is identical to that in Section~\ref{sec:ll} with $r_{\textrm{F}}=0$. If there is resistance between the phases, we can again model the LL fault loop, or the LG fault loop as in Section~\ref{sec:lg}. LLL and LLLG faults similarly admit the analyses of earlier sections. As such, we do not discuss them further here.

\subsubsection{Unfaulted phases}\label{sec:nofault}

We now discuss phases without faults. This applies both to scenario N in $\mathbb{F}$, in which there is no fault, and unfaulted phases during asymmetrical faults, e.g., phases b and c during an ag fault.

The apparent quantities are voltage and current vectors measured by the relay,
\begin{align*}
v_{\textrm{A}}^{\textrm{N}} &= v_{\textrm{L}},\quad i_{\textrm{A}}^{\textrm{N}} = i_{\textrm{L}}.
\end{align*}
Here, $\psi^{\textrm{N}}=\bm{I}$, the identity matrix. Ohm's law across, e.g., phase a while unfaulted gives
\begin{align*}
v_{\textrm{L}}^{a} &=  z i_{\textrm{L}}^{a}  + v_{\textrm{R}}^{a}.
\end{align*}

Note that traditionally, distance protection does not incorporate models of unfaulted phases because there is no need; other than close-in faults (which we can exclude by setting $\underbar{m}_z>0$), there is little risk of an unfaulted phase appearing faulted to the relay~\cite{horowitz2022power}. In other words, it is sufficient for the relay to check if its measurements match the model of a fault, and not to explicitly rule out the lack thereof. Here, we explicitly model normal operation because we later constrain auxiliary signals to make each fault appear different from normal operation.

Modeling unfaulted phases in an asymmetrical fault could help by reducing the size of the model's uncertainty set. On the other hand, modeling unfaulted phases leaves room for model error and adds complexity. For these reasons, we do not model unfaulted phases in this paper.

\subsubsection{Intermediate infeeds}\label{sec:model:inin}

Intermediate infeeds are junctions between the relay and potential faults~\cite{ziegler2011numerical}. In Figs.~\ref{fig:ag} and \ref{fig:ab}, this would appear as a third bus connecting to the line somewhere between buses L and R. We do not fully address intermediate infeeds in this paper, as doing so would add excessive notation and repetitive analyses. Instead, in this section we model intermediate infeeds, and in Section~\ref{sec:unc:inin} we describe how they would affect the uncertainty sets we later construct.

Let $m_{\textrm{I}}\in(0,1)$ and $i_{\textrm{I}}$ be the intermediate infeed's normalized location and current. The relay knows $m_{\textrm{I}}$, but does not observe $i_{\textrm{I}}$. The intermediate infeed adds a voltage drop to the fault loop, the form of which depends on $m_{\textrm{I}}$.
\begin{itemize}
\item If $m_{\textrm{I}}\geq m_z$, the fault occurs between the relay and the intermediate infeed. The only part of the fault loop $i_{\textrm{I}}$ flows through is the fault. The additional voltage drops for ag and ab faults are respectively
\[
m_r r_{\textrm{F}} i_{\textrm{I}}^{\textrm{a}}\quad\textrm{and}\quad \frac{m_r r_{\textrm{F}}}{2}\left( i_{\textrm{I}}^{\textrm{a}}-i_{\textrm{I}}^{\textrm{b}}\right).
\]
\item If $m_z>m_{\textrm{I}}$, the fault occurs after the intermediate infeed. Then $i_{\textrm{I}}$ flows through both the fault and the portion of the line in the fault loop. The additional voltage drops for ag and ab faults are respectively
\[
(m_z-m_{\textrm{I}})z\left(i_{\textrm{I}}^{\textrm{a}}+ki_{\textrm{I}}^0\right) + m_r r_{\textrm{F}} i_{\textrm{I}}^{\textrm{a}}
\]
and
\[
(m_z-m_{\textrm{I}})z\left(i_{\textrm{I}}^{\textrm{a}}-i_{\textrm{I}}^{\textrm{b}}\right) + \frac{m_r r_{\textrm{F}}}{2}\left( i_{\textrm{I}}^{\textrm{a}}-i_{\textrm{I}}^{\textrm{b}}\right).
\]
\end{itemize}
Because the relay cannot know if a fault occurs before or after the infeed, it should account for both cases. To avoid introducing a kink, e.g., with a maximum, they should be treated as two separate models. Hence, if an intermediate infeed is present, this means creating two models for each of the eleven faults in $\mathbb{F}\setminus\textrm{N}$.

\section{Model uncertainty}\label{sec:uncertainty}

While taking measurements, a relay has limited information about potential faults and the surrounding power network. Over the next few sections, we will integrate this uncertainty into the fault loop models of Section~\ref{sec:faultloops}. We distinguish between two timeframes.
\begin{itemize}
\item \textit{Pre-test.} Before measurement, the relay does not know if there is a fault and, if so, its type, $\eta\in\mathbb{F}\setminus\textrm{N}$, location, $m_z$, and resistance, $m_r$, or the voltages and currents at the terminals, $v_{\textrm{R}}$, $i_{\textrm{R}}$, $v_{\textrm{L}}$, and $i_{\textrm{L}}$. We face pre-test uncertainty when designing the auxiliary signal, $\Delta$, in Section~\ref{sec:protection}.
\item \textit{Post-test.} After measurement, the relay knows $v_{\textrm{L}}$ and $i_{\textrm{L}}$. The rest of the above quantities remain uncertain. The relay faces post-test uncertainty when testing for faults. We intend for the post-test uncertainty sets to be used as relay characteristics.
\end{itemize}
Under any uncertainty, there are sets of possible values that the right hand sides of (\ref{eq:KVL}) can take on. We will use the Minkowski sum to characterize these sets, both pre-test and post-test.

We use the following notation for uncertainty sets.
\begin{itemize}
\item Calligraphic letters denote the uncertainty sets of physical quantities; e.g., $\mathcal{V}_{\textrm{F}}^{\eta}$ is the set of all values that $v_{\textrm{F}}$ can take on in scenario $\eta\in\mathbb{F}$.
\item Given a set $\mathcal{X}$ and scalar $\alpha$,
\[
\alpha \mathcal{X} = \left\{
\alpha x \;\left | \; x \in \mathcal{X} \right.
\right\}.
\]
\item The Minkowski sum of two sets, $\mathcal{X}$ and $\mathcal{Y}$, is
\[
\mathcal{X}\oplus\mathcal{Y}=\left\{x+y\;|\; x\in\mathcal{X},\;y\in\mathcal{Y}\right\}.
\]
\end{itemize}

For each $\eta\in\mathbb{F}$, we will characterize the Minkowski sums
\begin{subequations}
\label{eq:KVLMS}
\begin{align}
\mathcal{V}_{\textrm{A}}^{\eta} &= \mathcal{V}_{\textrm{T}}^{\eta} \oplus \mathcal{V}_{\textrm{F}}^{\eta}\label{eq:KVLMSv}\\
\mathcal{Z}_{\textrm{A}}^{\eta} &= \mathcal{Z}_{\textrm{T}} \oplus \mathcal{Z}_{\textrm{F}}^{\eta}\label{eq:KVLMSz}
\end{align}
\end{subequations}
for pre-test and post-test uncertainty. In words, $\mathcal{V}_{\textrm{A}}^{\eta}$ and $\mathcal{Z}_{\textrm{A}}^{\eta}$ are all possible apparent voltages and impedances the relay can observe in scenario $\eta$. This is a generalization of (\ref{eq:KVL}) in which each quantity resides in an uncertainty set.\footnote{We could express the uncertainty in other ways, e.g., in the space of $(v_{\textrm{L}},i_{\textrm{L}})$. All forms lead to the same lists of constraints, which we write out in Appendix~\ref{app:sets}. We use this form because it is interpretable in terms of KVL.}

The Minkowski sum is in general intractable~\cite{khachiyan2008generating}. This is not an issue here, as we do not compute it explicitly for the pre-test uncertainty sets, and the structure of the post-test uncertainty sets renders it tractable.

Post-test, $\mathcal{V}_{\textrm{T}}$ and $\mathcal{V}_{\textrm{F}}$ do not depend on any common uncertainty and have exact, convex representations. Pre-test, however, they both depend on the local current, $i_{\textrm{L}}$, which is uncertain. This coupling introduces nonconvex bilinearities. We derive convex approximations of the pre-test uncertainty sets in Section~\ref{sec:convex}.

We now characterize the aspects of the uncertainty sets that are common to all faults. For all fault types, pre- and post-test,
\begin{align}
 \mathcal{Z}_{\textrm{T}} &= \{m_zz\;|\;m_z\in[\underbar{m}_z,1]\}.\label{eq:ZT}
\end{align}
Post-test, $\mathcal{V}_{\textrm{T}}^{\eta}$ is similarly simple because $v_{\textrm{T}}=m_zzi_{\textrm{A}}$, and $i_{\textrm{A}}$ is known. Hence, post-test, $\mathcal{Z}_{\textrm{T}}$ and $\mathcal{V}_{\textrm{T}}^{\eta}$ are both line segments. Pre-test, $\mathcal{V}_{\textrm{T}}^{\eta}$ is more complicated because $i_{\textrm{A}}$ is uncertain. $\mathcal{V}_{\textrm{F}}^{\eta}$ and $\mathcal{Z}_{\textrm{F}}^{\eta}$ are also more complicated because they depend on $m_z$, $m_r$, and the remote current, $i_{\textrm{R}}$, which are always uncertain.

\begin{approximation}\label{approx:SMIBR}
In order to characterize $i_{\textrm{R}}$, we make the following assumptions about the grid in the few cycles following a fault.
\begin{itemize}
\item SGs behave like balanced voltage sources. Let $v^{\circ}_{\mathcal{S}}$ be the vector of pre-fault voltages of the SGs.
\item IBRs behave like balanced current sources. Let $i^{\circ}_{\mathcal{C}}$ be the vector of pre-fault currents of the IBRs.
\item The voltage and current extraction at each load bus have an affine relationship. This could, e.g., represent an impedance, or the Th\'{e}venin or Norton equivalent of an RLC circuit.
\end{itemize}
\end{approximation}
Note that these assumptions could be different. For instance, the pre-fault voltage angle of a grid-forming IBR might persist for a few cycles after a fault. To further simplify the problem, we make the following approximation about the terminal voltages and currents.
\begin{approximation}\label{approx:hatm}
For each $\eta\in\mathbb{F}$, $i_{\textrm{L}}$, $i_{\textrm{R}}$, $v_{\textrm{L}}$, and $v_{\textrm{R}}$ depend on nominal values of $m_z$ and $m_r$, e.g., $\hat{m}_z=\hat{m}_r=1/2$.
\end{approximation}

Together, Approximations~\ref{approx:SMIBR} and~\ref{approx:hatm} enable us to express the terminal voltages and currents as linear function of the sources. For scenario $\eta\in\mathbb{F}$,
\begin{align}
\label{eq:vrir}
\begin{split}
i_{\textrm{L}} &= \Gamma^{\eta} \begin{bmatrix}
i^{\circ}_{\mathcal{C}}\\v^{\circ}_{\mathcal{S}}\end{bmatrix},\quad
i_{\textrm{R}} = \Theta^{\eta} \begin{bmatrix}
i^{\circ}_{\mathcal{C}}\\v^{\circ}_{\mathcal{S}}
\end{bmatrix},\\
v_{\textrm{L}} &= \Phi^{\eta} \begin{bmatrix}
i^{\circ}_{\mathcal{C}}\\v^{\circ}_{\mathcal{S}}\end{bmatrix},\quad
v_{\textrm{R}} = \Psi^{\eta} \begin{bmatrix}
i^{\circ}_{\mathcal{C}}\\v^{\circ}_{\mathcal{S}}
\end{bmatrix}.
\end{split}
\end{align}
We show how to compute these matrices for given $\hat{m}_z$ and $\hat{m}_r$ in Appendix~\ref{app:netmat}.

Approximation~\ref{approx:hatm} implies that the terminal currents, $i_{\textrm{L}}$ and $i_{\textrm{R}}$, do not depend on $m_z$ or $m_r$. As a result, the fault current, $i_{\textrm{F}}$, and its uncertainty set, $\mathcal{I}_{\textrm{F}}^{\eta}$, do not depend on $m_z$ or $m_r$ either. We may thus write
\[
\mathcal{V}_{\textrm{F}}^{\eta} = \left\{ m_rr_{\textrm{F}}\mathcal{I}_{\textrm{F}}^{\eta}  \;|\; 
m_r\in\left[0,1\right]
\right\},
\]
and note that this relation holds both pre- and post-test. The below lemma will be key to efficient computation later in Section~\ref{sec:projection}.
\begin{lemma}\label{lemma:VF0}
If $\mathcal{I}_{\textrm{F}}^{\eta}$ is convex, then $\mathcal{V}_{\textrm{F}}^{\eta}$ is the convex hull of the set $r_{\textrm{F}}\mathcal{I}_{\textrm{F}}^{\eta} \cup \{0\}$.
\end{lemma}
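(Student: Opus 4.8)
The plan is to recognize the claim as a standard fact about the ``join'' of a convex set with the origin. First I would abbreviate $\mathcal{K} = r_{\textrm{F}}\mathcal{I}_{\textrm{F}}^{\eta}$ and note that $\mathcal{K}$ is convex, since scaling a convex set by the positive scalar $r_{\textrm{F}}$ preserves convexity. Unwinding the definition of $\mathcal{V}_{\textrm{F}}^{\eta}$, a point belongs to it exactly when it has the form $m_r x$ for some $m_r\in[0,1]$ and $x\in\mathcal{K}$, so the lemma reduces to the set identity
\[
\{\, m_r x \;|\; m_r\in[0,1],\; x\in\mathcal{K}\,\} = \mathrm{conv}\!\left(\mathcal{K}\cup\{0\}\right).
\]

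For the inclusion $\subseteq$, I would take any $y = m_r x$ with $m_r\in[0,1]$ and $x\in\mathcal{K}$ and write $y = m_r x + (1-m_r)\cdot 0$, exhibiting $y$ as a convex combination of $x\in\mathcal{K}$ and $0$; this direction uses nothing beyond the definitions. The reverse inclusion $\supseteq$ is the crux and is where convexity of $\mathcal{K}$ does the work. Here I would take any $y\in\mathrm{conv}(\mathcal{K}\cup\{0\})$, write it as a finite convex combination $y = \sum_i \lambda_i x_i + \mu\cdot 0$ with $x_i\in\mathcal{K}$, $\lambda_i,\mu\ge 0$, and $\sum_i\lambda_i + \mu = 1$, and set $t := \sum_i\lambda_i = 1-\mu\in[0,1]$. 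If $t=0$ then $y=0 = 0\cdot x$ for any $x\in\mathcal{K}$, so $y\in\mathcal{V}_{\textrm{F}}^{\eta}$; if $t>0$ then $y = t\sum_i(\lambda_i/t)x_i$, and because the weights $\lambda_i/t$ are nonnegative and sum to one, convexity of $\mathcal{K}$ gives $x := \sum_i(\lambda_i/t)x_i\in\mathcal{K}$, so that $y = tx$ lies in $\mathcal{V}_{\textrm{F}}^{\eta}$.

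I expect the only real obstacle to be bookkeeping rather than substance: handling the degenerate case $t=0$ (which tacitly requires $\mathcal{I}_{\textrm{F}}^{\eta}\neq\emptyset$), and making explicit that collapsing the multi-point combination $\sum_i(\lambda_i/t)x_i$ into a single element of $\mathcal{K}$ is precisely where the hypothesis that $\mathcal{I}_{\textrm{F}}^{\eta}$ is convex is indispensable---without it the right-hand side would in general be strictly larger than $\mathcal{V}_{\textrm{F}}^{\eta}$. Because convexity lets me work directly with arbitrary finite convex combinations, no appeal to Carath\'eodory's theorem is needed.
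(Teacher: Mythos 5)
Your proof is correct and takes essentially the same route as the paper: the paper's one-sentence argument (every point of $\mathcal{V}_{\textrm{F}}^{\eta}$ lies on a segment between $0$ and a point of $r_{\textrm{F}}\mathcal{I}_{\textrm{F}}^{\eta}$) is exactly the geometric fact you prove, with your two-inclusion argument supplying the details---in particular the reverse inclusion, where convexity of $\mathcal{I}_{\textrm{F}}^{\eta}$ collapses a general convex combination into a single segment---that the paper leaves implicit. No gap; yours is simply the fully written-out version.
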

\begin{proof}
The lemma follows from the fact that every point in $\mathcal{V}_{\textrm{F}}^{\eta}$ is on a line segment between 0 and a point in $r_{\textrm{F}}\mathcal{I}_{\textrm{F}}^{\eta}$.
\end{proof}


To characterize each $\mathcal{V}_{\textrm{F}}^{\eta}$ and $\mathcal{I}_{\textrm{F}}^{\eta}$, we must first give uncertainty sets for the terminal currents. We do so by adding uncertainty to $v^{\circ}_{\mathcal{S}}$ and $i^{\circ}_{\mathcal{C}}$ and then relating them to $i_{\textrm{L}}$ and $i_{\textrm{R}}$ through (\ref{eq:vrir}).

Let $\lambda$ be a noise vector that resides in the set $\Lambda$. Let 
\[
\mathcal{U} = \left\{\left.
\begin{bmatrix}
i^{\circ}_{\mathcal{C}}\\v^{\circ}_{\mathcal{S}}
\end{bmatrix} + \Sigma\lambda\; \right| \; \lambda\in\Lambda
\right\},
\]
the uncertainty set of the SG voltage and IBR current sources, where the matrix $\Sigma$ scales $\lambda$ according to how uncertain the sources are. There are several forms the set $\Lambda$ can take. Here we consider the following two.
\begin{itemize}
\item Norm-bounded uncertainty:
\begin{align*}
\Lambda&=\left\{\lambda \;|\;  \left\|\lambda\right\|_2\leq 1\right\}.
\end{align*}
\item Polyhedral uncertainty, in which $\Lambda$ is a zonotope. Zonotopes are a special case of polytopes, and thus can be written in half-space form as
\begin{align*}
\Lambda&=\left\{\lambda \;|\;  P\lambda\leq \bm{1}\right\}.
\end{align*}
Let $P_k$ be row $k$ of $P$. Because zonotopes have $180^{\circ}$ rotational symmetry about the origin, $-P_k\lambda\leq 1$ is also in $\Lambda$, which implies $\lambda^{\top}P_k^{\top}P_k\lambda\leq 1$. Appendix~\ref{app:zono} provides further background on zonotopes.
\end{itemize}

The uncertainty sets for the terminal currents are
\begin{subequations}
\label{eq:vrirlambda}
\begin{align*}
\mathcal{I}_{\textrm{L}}^{\eta} &= \left\{\left.
  \Gamma^{\eta} u\; \right| \; u\in\mathcal{U}
\right\}\quad\textrm{and}\quad \mathcal{I}_{\textrm{R}}^{\eta} = \left\{\left.
  \Theta^{\eta} u\; \right| \; u\in\mathcal{U}
\right\}.
\end{align*}
\end{subequations}
If $\Lambda$ is a zonotope, then so are $\mathcal{U}$, $\mathcal{I}_{\textrm{L}}^{\eta}$, and $\mathcal{I}_{\textrm{R}}^{\eta}$ because affine transformations of zonotopes are zonotopes. If $\Lambda$ is norm-bounded, then these sets are ellipsoidal because affine transformations of ellipsoids are ellipsoids.

We will encounter products of $m_z$ and $m_r$ with the terminal currents, which result in products with $\lambda$. For a generic $m\in[\underbar{m},1]$, define the set
\begin{align*}
\begin{split}
\mathcal{U}_m &= \left\{
m \mathcal{U} \; \left| \; m\in[\underbar{m},1]
\right.\right\}\\
&= \left\{\left.
m\begin{bmatrix}
i^{\circ}_{\mathcal{C}}\\v^{\circ}_{\mathcal{S}}
\end{bmatrix} + m\Sigma\lambda\; \right| \; \lambda\in\Lambda,\;m\in[\underbar{m},1]\right\}.
\end{split}
\end{align*}
The term $m\Sigma\lambda$ is bilinear. Post-test in Section~\ref{sec:expost}, we encounter this bilinearity in the set $\mathcal{U}_{m_r}$ and are able to linearize it with a simple change of variables. Pre-test in Section~\ref{sec:exante}, we encounter bilinearities in $\mathcal{U}_{m_z}$ and $\mathcal{U}_{m_r}$. Because they share variables, we cannot linearize and instead derive convex relaxations in Section~\ref{sec:convex}.

We now make a few comments about how we have modeled the network and uncertainty. The logic in present-day distance relays is not based on explicit network modeling. Here, we must account for the network to capture how the auxiliary signal, which is added to the IBR currents, affects the relay's observations. We arrived at this model because it is based on first principles, accommodates standard assumptions about the behaviors of SGs and IBRs during faults, and is either linear or convex in the uncertain variables.

That said, we regard the relay's model of the network and sources as cruder than the relay's model of the line. This is because the relay knows precisely $z$ and the uncertainty sets of $m_z$ and $m_r$, whereas $i^{\circ}_{\mathcal{C}}$, $v^{\circ}_{\mathcal{S}}$, and $\Lambda$ are rough estimates of quantities elsewhere in the system. For this reason, we prioritize local over network-level uncertainty in our models. In particular, we could model the coupling between the voltage at the local terminal and at the fault in terms of the entire network, but prefer KVL on the fault loop; and we could include the constraint $v_{\textrm{L}}\in\mathcal{V}_{\textrm{L}}$, but do not because it is based on low-confidence, network-level modeling.

\section{Post-test uncertainty sets}\label{sec:expost}

We denote post-test uncertainty sets with an arrow, e.g., $\vec{\mathcal{V}}_{\textrm{L}}^{\eta}$. Today, a distance relay will conclude that there is a fault on its line if $z_{\textrm{A}}$ is in its characteristic, which is a bounded region of the complex plane. By construction, $\vec{\mathcal{Z}}_{\textrm{A}}^{\eta}$ is the smallest characteristic that contains every value of $z_{\textrm{A}}$ corresponding to scenario $\eta$. In this section, we characterize $\vec{\mathcal{Z}}_{\textrm{A}}^{\eta}$ for each $\eta\in\mathbb{F}\setminus\textrm{N}$ and derive analytical zonogon relaxations.

Post-test, the knowledge of $i_{\textrm{L}}$ simplifies the uncertainty sets. First, it implies that $\vec{\mathcal{V}}_{\textrm{A}}^{\eta}=i_{\textrm{A}}^{\eta}\vec{\mathcal{Z}}_{\textrm{A}}^{\eta}$, and hence that $z_{\textrm{A}}^{\eta}\in\vec{\mathcal{Z}}_{\textrm{A}}^{\eta}$ and $v_{\textrm{A}}^{\eta}\in\vec{\mathcal{V}}_{\textrm{A}}^{\eta}$ are equivalent criteria for each $\eta\in\mathbb{F}$. As such, we do not write out the corresponding voltage sets.

The only bilinearity we encounter post-test is between $m_r$ and $i_{\textrm{R}}$. We can write
\begin{align*}
\mathcal{U}_{m_r} &= \left\{\left.
m_r\begin{bmatrix}
i^{\circ}_{\mathcal{C}}\\v^{\circ}_{\mathcal{S}}
\end{bmatrix} + \Sigma\lambda_{m_r}\; \right| \; \lambda_{m_r}\in\Lambda_{m_r},\;{m_r}\in[0,1]\right\},
\end{align*}
where for norm-bounded uncertainty,
\begin{align*}
\Lambda_{m_r} &= \left\{\lambda_{m_r} \;|\;  \left\|\lambda_{m_r}\right\|_2\leq {m_r},\;{m_r}\in[0,1]\right\},
\end{align*}
and for zonotopic uncertainty,
\begin{align*}
\Lambda_{m_r} &= \left\{\lambda_{m_r} \;|\;  P\lambda_{m_r}\leq {m_r}\bm{1},\;{m_r}\in[0,1]\right\}.
\end{align*}
Because $\Lambda_{m_r}$ is convex in both cases, $\mathcal{U}_{m_r}$ and the uncertainty sets we derive below are also convex.

For any fault $\eta\in\mathbb{F}\setminus\textrm{N}$, we can write either (\ref{eq:lgkvlz}) or (\ref{eq:zab1}) in the form
\begin{subequations}
\begin{align}
z_{\textrm{A}}^{\eta} &= m_zz +  m_r\underset{=r_{\textrm{F}}i_{\textrm{F}}/i_{\textrm{A}}^{\eta}}{\underbrace{ \left(\xi^{\eta}  + \Xi^{\eta} i_{\textrm{R}} \right) }}\label{eq:zxi1}\\
&= m_zz + m_r \xi^{\eta}  + \Xi^{\eta} \Theta^{\eta}u_r,\label{eq:zxi2}
\end{align}
\end{subequations}
where
\begin{align*}
\xi^{\textrm{ag}} &= \frac{r_{\textrm{F}} i_{\textrm{L}}^{\textrm{a}}}{i_{\textrm{L}}^{\textrm{a}}+ki_{\textrm{L}}^0},\quad \Xi^{\textrm{ag}} = \frac{r_{\textrm{F}}}{i_{\textrm{L}}^{\textrm{a}}+ki_{\textrm{L}}^0}\psi^{\textrm{ag}},\\
\xi^{\textrm{ab}} &= \frac{r_{\textrm{F}} }{2},\quad\textrm{and}\quad\Xi^{\textrm{ab}} = \frac{r_{\textrm{F}}}{2\left(i_{\textrm{L}}^{a} -i_{\textrm{L}}^{b}\right)}\psi^{\textrm{ab}},
\end{align*}
and where $\psi^{\textrm{ag}}$ and $\psi^{\textrm{ab}}$ are as in Section~\ref{sec:faultloops}. The post-test uncertainty set for the fault impedance is 
\[
\vec{\mathcal{Z}}_{\textrm{F}}^{\eta}= \left\{\left. m_r\xi^{\eta}   + \Xi^{\eta}\Theta^{\eta}u_r \;\right|\; m_r\in[0,1],\;u_r\in \mathcal{U}_{m_r}  \right\}.
\]
This is convex because $\mathcal{U}_{m_r}$ is convex. Also recall the characterization of $\vec{\mathcal{Z}}_{\textrm{F}}^{\eta}$ in Lemma~\ref{lemma:VF0}, which we make use of for computation in the next section. The post-test uncertainty set for the apparent impedance seen by the relay is 
\[
\vec{\mathcal{Z}}_{\textrm{A}}^{\eta} = \mathcal{Z}_{\textrm{T}} \oplus \vec{\mathcal{Z}}_{\textrm{F}}^{\eta},
\]
which is convex because it is the Minkowski sum of two convex sets.

\subsection{Projection onto $\mathbb{C}$}\label{sec:projection}

A relay determines if there is a fault of type $\eta\in\mathbb{F}\setminus\textrm{N}$ by checking if $z_{\textrm{A}}^{\eta}\in\vec{\mathcal{Z}}_{\textrm{A}}^{\eta}$. As written above, this means searching for values of $m_r$, $u_r$, and $\lambda_{m_r}$ that are feasible with $z_{\textrm{A}}^{\eta}$. This can be done, e.g., by solving a linear or convex program with a zero objective.

While viable, this sort of computation is heavier than what modern distance relays solve online, i.e., checking if $z_{\textrm{A}}^{\eta}$ satisfies the handful of inequalities that make up its characteristic in $\mathbb{C}$. We therefore want to represent each $\vec{\mathcal{Z}}_{\textrm{A}}^{\eta}$ as a region of $\mathbb{C}$, with $z_{\textrm{A}}^{\eta}$ the only variable. This would enable a relay to evaluate $z_{\textrm{A}}^{\eta}\in\vec{\mathcal{Z}}_{\textrm{A}}^{\eta}$ in the same manner as is currently done.

To obtain such a representation, we must project $\vec{\mathcal{Z}}_{\textrm{A}}^{\eta}$ onto $\mathbb{C}$. A naive approach is to start with the above high-dimensional H-representation of $\vec{\mathcal{Z}}_{\textrm{A}}^{\eta}$, convert it to V-representation, project each vertex onto $\mathbb{C}$, and take the convex hull. This is inefficient because converting a polytope from H- to V-representation is NP-hard~\cite{khachiyan2008generating}.

We can efficiently project $\vec{\mathcal{Z}}_{\textrm{A}}^{\eta}$ onto $\mathbb{C}$ if $\Lambda$ is a zonotope. Denote the center and generators of $\Lambda$ by $c$ and $g_i$, $i=1,...,p$. The following steps produce the vertices of the projection of $\vec{\mathcal{Z}}_{\textrm{A}}^{\eta}$ onto $\mathbb{C}$.
\begin{enumerate}
\item The set $\frac{r_{\textrm{F}}}{i_{\textrm{A}}^{\eta}}\mathcal{I}^{\eta}_{\textrm{F}} = \xi^{\eta}  + \Xi^{\eta} \mathcal{I}^{\eta}_{\textrm{R}}$ is a zonogon with center
\begin{align}
c_{\textrm{F}}&=\xi^{\eta} + \Xi^{\eta}\Theta^{\eta}\left(\begin{bmatrix}
i^{\circ}_{\mathcal{C}}\\v^{\circ}_{\mathcal{S}}
\end{bmatrix} + \Sigma c\right)\label{ex:cF}
\end{align}
and generators $\Xi^{\eta}\Theta^{\eta}\Sigma g_i$, $i=1,...,p$. Convert this to V-representation using the algorithm in Appendix~\ref{app:zono}.
\item Following Lemma~\ref{lemma:VF0}, add the vertex $0$ to $r_{\textrm{F}}\mathcal{I}^{\eta}_{\textrm{F}}$ obtain the V-representation of $\vec{\mathcal{V}}_{\textrm{F}}^{\eta}$. We the have $\vec{\mathcal{Z}}_{\textrm{F}}^{\eta}=\frac{1}{i_{\textrm{A}}^{\eta}}\vec{\mathcal{V}}_{\textrm{F}}^{\eta}$.
\item Compute $\mathcal{Z}_{\textrm{T}} \oplus \vec{\mathcal{Z}}_{\textrm{F}}^{\eta}$. This amounts to adding the two vertices in $\mathcal{Z}_{\textrm{T}}$, $\left\{\underbar{m}_zz,z\right\}$, to each of the vertices in $\vec{\mathcal{Z}}_{\textrm{F}}^{\eta}$.
\item Take the convex hull, e.g., using the gift wrapping algorithm~\cite{jarvis1973identification}.
\end{enumerate}
The slowest part of this procedure is taking the convex hull; the gift wrapping algorithm has $\mathcal{O}\left(nh\right)$ complexity if $h$ of the $n$ vertices are on the hull.

\subsection{Zonogon relaxation}\label{sec:zonogon}

We now derive an analytical zonogon relaxation, which can be projected onto $\mathbb{C}$ even more efficiently than $\vec{\mathcal{Z}}_{\textrm{A}}^{\eta}$. Appendix~\ref{app:zono} provides background on zonogons, including their G-representation and conversion to V-representation.

Recall from Lemma~\ref{lemma:VF0} that $\vec{\mathcal{V}}_{\textrm{F}}^{\eta}$ is the convex hull of $0$ and $r_{\textrm{F}}\mathcal{I}^{\eta}_{\textrm{F}}$, and thus $\vec{\mathcal{Z}}_{\textrm{F}}^{\eta}$ of $0$ and $\frac{r_{\textrm{F}}}{i_{\textrm{A}}^{\eta}}\mathcal{I}^{\eta}_{\textrm{F}}$. Let $\hat{\mathcal{I}}^{\eta}_{\textrm{F}}$ be a shifted version of $\mathcal{I}^{\eta}_{\textrm{F}}$ with zero as its center, and let $\hat{\mathcal{Z}}_{\textrm{F}}^{\eta}$ be the convex hull of $\frac{r_{\textrm{F}}}{i_{\textrm{A}}^{\eta}}\left(\mathcal{I}^{\eta}_{\textrm{F}}\cup \hat{\mathcal{I}}^{\eta}_{\textrm{F}}\right)$. The zonogon relaxation is
\[
\hat{\mathcal{Z}}_{\textrm{A}}^{\eta} = \mathcal{Z}_{\textrm{T}} \oplus \hat{\mathcal{Z}}_{\textrm{F}}^{\eta}.
\]
\begin{lemma}\label{lemma:etazono1}
$\vec{\mathcal{Z}}_{\textrm{A}}^{\eta}\subseteq\hat{\mathcal{Z}}_{\textrm{A}}^{\eta}$.
\end{lemma}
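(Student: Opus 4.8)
The plan is to reduce the stated containment to the analogous containment between the fault-impedance sets, $\vec{\mathcal{Z}}_{\textrm{F}}^{\eta}\subseteq\hat{\mathcal{Z}}_{\textrm{F}}^{\eta}$, and then to establish the latter using the central symmetry of the centered zonogon that enters $\hat{\mathcal{Z}}_{\textrm{F}}^{\eta}$. For the reduction, I would invoke the monotonicity of the Minkowski sum: if $\mathcal{X}\subseteq\mathcal{Y}$, then $\mathcal{X}\oplus\mathcal{Z}\subseteq\mathcal{Y}\oplus\mathcal{Z}$ for any $\mathcal{Z}$, which is immediate from the definition. Since $\vec{\mathcal{Z}}_{\textrm{A}}^{\eta}=\mathcal{Z}_{\textrm{T}}\oplus\vec{\mathcal{Z}}_{\textrm{F}}^{\eta}$ and $\hat{\mathcal{Z}}_{\textrm{A}}^{\eta}=\mathcal{Z}_{\textrm{T}}\oplus\hat{\mathcal{Z}}_{\textrm{F}}^{\eta}$ share the common summand $\mathcal{Z}_{\textrm{T}}$, it suffices to prove $\vec{\mathcal{Z}}_{\textrm{F}}^{\eta}\subseteq\hat{\mathcal{Z}}_{\textrm{F}}^{\eta}$.

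Next I would record the convex-hull descriptions of the two fault-impedance sets. Writing $S=\tfrac{r_{\textrm{F}}}{i_{\textrm{A}}^{\eta}}\mathcal{I}_{\textrm{F}}^{\eta}$, Lemma~\ref{lemma:VF0} (after dividing by $i_{\textrm{A}}^{\eta}$) gives $\vec{\mathcal{Z}}_{\textrm{F}}^{\eta}=\operatorname{conv}\!\left(\{0\}\cup S\right)$. Writing $\hat{S}=\tfrac{r_{\textrm{F}}}{i_{\textrm{A}}^{\eta}}\hat{\mathcal{I}}_{\textrm{F}}^{\eta}$, the definition of $\hat{\mathcal{Z}}_{\textrm{F}}^{\eta}$ gives $\hat{\mathcal{Z}}_{\textrm{F}}^{\eta}=\operatorname{conv}\!\left(S\cup\hat{S}\right)$. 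The goal is therefore the purely set-theoretic statement $\operatorname{conv}(\{0\}\cup S)\subseteq\operatorname{conv}(S\cup\hat{S})$.

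The crux is to show that the right-hand set contains both $S$ and the point $0$; convexity then forces it to contain the convex hull of $\{0\}\cup S$. That $S\subseteq\operatorname{conv}(S\cup\hat{S})$ is immediate. For the origin, I would use that $\hat{\mathcal{I}}_{\textrm{F}}^{\eta}$ is, by construction, the zonogon $\mathcal{I}_{\textrm{F}}^{\eta}$ translated to have center $0$; multiplication by the complex scalar $\tfrac{r_{\textrm{F}}}{i_{\textrm{A}}^{\eta}}$ is an invertible linear map on $\mathbb{C}\cong\mathbb{R}^2$ that fixes the origin, so $\hat{S}$ is again a zonogon centered at $0$. Since the center of any zonogon belongs to it, $0\in\hat{S}\subseteq\operatorname{conv}(S\cup\hat{S})$. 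Hence $\{0\}\cup S\subseteq\hat{\mathcal{Z}}_{\textrm{F}}^{\eta}$, and taking convex hulls yields $\vec{\mathcal{Z}}_{\textrm{F}}^{\eta}\subseteq\hat{\mathcal{Z}}_{\textrm{F}}^{\eta}$, which combined with the first paragraph completes the proof. The only point requiring care---the ``obstacle,'' such as it is---is the justification that $0\in\hat{S}$, i.e., that the centered zonogon genuinely contains its center; everything else is bookkeeping with Minkowski sums and convex hulls.
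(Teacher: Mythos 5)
Your proof is correct and is essentially the paper's own argument, just written out in full: the paper's one-line proof likewise rests on $0\in\hat{\mathcal{I}}_{\textrm{F}}^{\eta}$ giving $\vec{\mathcal{Z}}_{\textrm{F}}^{\eta}\subseteq\hat{\mathcal{Z}}_{\textrm{F}}^{\eta}$, followed by (implicit) monotonicity of the Minkowski sum with the common summand $\mathcal{Z}_{\textrm{T}}$. Your expansion---identifying $\vec{\mathcal{Z}}_{\textrm{F}}^{\eta}=\operatorname{conv}(\{0\}\cup S)$ via Lemma~\ref{lemma:VF0}, and justifying $0\in\hat{S}$ because a zonogon contains its center and scalar multiplication fixes the origin---supplies exactly the details the paper leaves implicit.
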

\begin{proof}
$\vec{\mathcal{Z}}_{\textrm{F}}^{\eta}\subseteq\hat{\mathcal{Z}}_{\textrm{F}}^{\eta}$ because $0\in\hat{\mathcal{I}}_{\textrm{F}}^{\eta}$, which implies the lemma.
\end{proof}

The next lemma gives the analytical representation of $\hat{\mathcal{Z}}_{\textrm{A}}^{\eta}$. Recall $c_{\textrm{F}}$ from (\ref{ex:cF}).

\begin{lemma}\label{lemma:etazono2}
Suppose $\Lambda$ is a zonotope with center $c$ and generators $g_i$, $i=1,...,p$. Then $\hat{\mathcal{Z}}_{\textrm{A}}^{\eta}$ is a zonogon with center
\[
\frac{\left(1+\underbar{m}_z\right)z+c_{\textrm{F}}}{2}
\]
and generators 
\[
\frac{\left(1-\underbar{m}_z\right)z}{2},\;\frac{c_{\textrm{F}}}{2},\;\Xi^{\eta}\Theta^{\eta}\Sigma g_i,\; i=1,...,p.
\]
\end{lemma}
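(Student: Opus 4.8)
The plan is to build $\hat{\mathcal{Z}}_{\textrm{A}}^{\eta}$ as an iterated Minkowski sum of zonogons, tracking centers and generators at each stage, and to invoke three standard facts: a line segment is a one-generator zonogon; the Minkowski sum of zonogons adds centers and concatenates generator lists; and the convex hull of a convex set with a translate of itself equals the Minkowski sum with the connecting segment. Central symmetry of the centered zonogon is not needed, only convexity.

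First, I would record the two building blocks. The segment $\mathcal{Z}_{\textrm{T}}=\{m_z z : m_z\in[\underbar{m}_z,1]\}$ runs from $\underbar{m}_z z$ to $z$, so it is a zonogon with center $\frac{(1+\underbar{m}_z)z}{2}$ and single generator $\frac{(1-\underbar{m}_z)z}{2}$. By the G-representation already established for $\frac{r_{\textrm{F}}}{i_{\textrm{A}}^{\eta}}\mathcal{I}^{\eta}_{\textrm{F}}$, this set equals $c_{\textrm{F}}+Z_0$, where $Z_0=\{\sum_{i=1}^p \beta_i h_i \mid \beta_i\in[-1,1]\}$ is the centered zonogon with generators $h_i=\Xi^{\eta}\Theta^{\eta}\Sigma g_i$. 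Since $\hat{\mathcal{I}}^{\eta}_{\textrm{F}}$ is $\mathcal{I}^{\eta}_{\textrm{F}}$ translated to have center $0$, translation leaves the generators unchanged and scaling by $\frac{r_{\textrm{F}}}{i_{\textrm{A}}^{\eta}}$ maps its center to $0$ while keeping the generators $h_i$; hence $\frac{r_{\textrm{F}}}{i_{\textrm{A}}^{\eta}}\hat{\mathcal{I}}^{\eta}_{\textrm{F}}=Z_0$.

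The key step is to evaluate $\hat{\mathcal{Z}}_{\textrm{F}}^{\eta}=\mathrm{conv}\big((c_{\textrm{F}}+Z_0)\cup Z_0\big)$. I would use the identity that for any convex set $A$ and vector $v$, $\mathrm{conv}\big(A\cup(A+v)\big)=A\oplus[0,v]$: since $A$ and $A+v$ are convex, any hull point is $\lambda a_1+(1-\lambda)(a_2+v)=\big(\lambda a_1+(1-\lambda)a_2\big)+(1-\lambda)v$, with the first term in $A$ by convexity and the second in $[0,v]$, while conversely $a+tv=(1-t)a+t(a+v)$ lies in the hull. Applying this with $A=Z_0$ and $v=c_{\textrm{F}}$ gives $\hat{\mathcal{Z}}_{\textrm{F}}^{\eta}=Z_0\oplus[0,c_{\textrm{F}}]$. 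The segment $[0,c_{\textrm{F}}]$ is a zonogon with center $\frac{c_{\textrm{F}}}{2}$ and generator $\frac{c_{\textrm{F}}}{2}$, so $\hat{\mathcal{Z}}_{\textrm{F}}^{\eta}$ is a zonogon with center $\frac{c_{\textrm{F}}}{2}$ and generators $\frac{c_{\textrm{F}}}{2},h_1,\dots,h_p$.

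Finally, I would take $\hat{\mathcal{Z}}_{\textrm{A}}^{\eta}=\mathcal{Z}_{\textrm{T}}\oplus\hat{\mathcal{Z}}_{\textrm{F}}^{\eta}$, adding the centers to get $\frac{(1+\underbar{m}_z)z}{2}+\frac{c_{\textrm{F}}}{2}$ and concatenating the generator lists to get $\frac{(1-\underbar{m}_z)z}{2},\frac{c_{\textrm{F}}}{2},\Xi^{\eta}\Theta^{\eta}\Sigma g_i$, which is exactly the claimed representation. I expect the only nonroutine point to be the convex-hull identity in the middle step; everything else is bookkeeping of centers and generators under scaling, translation, and Minkowski sums.
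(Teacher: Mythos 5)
Your proposal is correct and takes essentially the same route as the paper: decompose $\hat{\mathcal{Z}}_{\textrm{A}}^{\eta}=\mathcal{Z}_{\textrm{T}}\oplus\hat{\mathcal{Z}}_{\textrm{F}}^{\eta}$, identify each summand as a zonogon, and then add centers and concatenate generator lists. The only difference is that the paper dispatches the characterization of $\hat{\mathcal{Z}}_{\textrm{F}}^{\eta}$ ``by inspection,'' whereas you supply the missing justification via the identity $\mathrm{conv}\left(A\cup(A+v)\right)=A\oplus[0,v]$ for convex $A$, which is exactly the fact needed to see that the hull of the two translates of the centered zonogon is itself a zonogon with the extra generator $c_{\textrm{F}}/2$.
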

\begin{proof}
By inspection, $\mathcal{Z}_{\textrm{F}}^{\eta}$ has center $c_{\textrm{F}}/2$ and generators $c_{\textrm{F}}/2$ and $\Xi^{\eta}\Theta^{\eta}\Sigma g_i$, $i=1,...,p$. $\mathcal{Z}_{\textrm{T}}$ is a line segment with center $\left(1+\underbar{m}_z\right)z/2$ and generator $\left(1-\underbar{m}_z\right)z/2$. We obtain the lemma by calculating $\hat{\mathcal{Z}}_{\textrm{A}}^{\eta}=\mathcal{Z}_{\textrm{T}} \oplus \hat{\mathcal{Z}}_{\textrm{F}}^{\eta}$.
\end{proof}

In general, we want relaxations to be as tight as possible. We can tighten $\hat{\mathcal{Z}}_{\textrm{A}}^{\eta}$ by adding cuts in the complex plane. Intuitively, if $\Lambda$ is not too large, the relaxation allows for more uncertainty on the left side of $\hat{\mathcal{Z}}_{\textrm{A}}^{\eta}$. One way to reduce this uncertainty is by adding the cut $\textrm{Im}[z]\textrm{Re}[z_{\textrm{A}}^{\eta}]\geq \textrm{Re}[z]\textrm{Im}[z_{\textrm{A}}^{\eta}]$. This removes the portion of the zonogon to the left of the line in $\mathbb{C}$ defined by $z$, and is illustrated in Section~\ref{sec:example:postest}. Note, however, that this cut is not valid if $\Lambda$ is large. In this case, the uncertainty from the sources is larger than that due to the fault resistance, and the cut might pass though rather than alongside $\vec{\mathcal{Z}}_{\textrm{A}}^{\eta}$.

\subsection{Example}\label{sec:example:postest}
We plot $\vec{\mathcal{Z}}_{\textrm{A}}^{\eta}$ and $\hat{\mathcal{Z}}_{\textrm{A}}^{\eta}$ for faults $\eta\in\{\textrm{ag},\textrm{ab}\}$. These are representative of all faults because the LG (and LL) faults all have similar post-test uncertainty sets; and, as mentioned in Section~\ref{sec:faultloops}, the corresponding fault loops are present in all other types of faults. All quantities are per unit.

The test system is based on the IEEE 14-bus network, as given in Chapter 3 of \cite{baeckeland2022thesis}. The line admittances are from Table 3.1 therein. The buses with SGs are $\mathcal{S}=\{1, 3, 5, 14\}$, with IBRs $\mathcal{C}=\{2, 4, 7,8,12\}$, and with loads $\{6,10,11,13\}$. The SGs are represented as voltage sources with unit magnitude and angles evenly spaced in $[0,2\pi/10]$. The IBRs are current sources with unit magnitude and angles evenly spaced in $[0,2\pi/3]$, which reflects IBRs' greater degree of angle uncertainty during faults. All loads have admittance $0.1+j0.01$.

To each source $i_k$ or $v_k$, $k\in\mathcal{C}\cup\mathcal{S}$, we add balanced noise $\lambda_k\left[1,\alpha,\alpha^2\right]^{\top}$, $\lambda_k\in\Lambda_k\subset\mathbb{C}$. Each $\Lambda_k$ is a regular polygon in $\mathbb{C}$ with $n_{\varphi}\geq4$ sides. $n_{\varphi}$ must be even for $\Lambda_k$ to be a zonogon; $n_{\varphi}=4$ corresponds to a square, $n_{\varphi}=6$ a hexagon, and so on. Here, we used $n_{\varphi}=20$. This makes the post-test uncertainty sets appear more rounded than conventional relay characteristics.

The H-representation of each $\Lambda_k$ is
\[
\textrm{Re}\left[
e^{-j\varphi}\lambda_i
\right]\leq 1,\; \varphi=\frac{2\pi l}{n_{\varphi}},\; l=1,...,n_{\varphi}.
\]
This approaches the unit circle as $n_{\varphi}$ increases. The corresponding G-representation of $\Lambda_k$ has center 0 and generators
\[
g_l=\tan\left(\frac{\pi}{n_{\varphi}}\right)\begin{bmatrix}\sin(\varphi_l) \\ \cos(\varphi_l)\end{bmatrix},\; \varphi_l=\frac{2\pi l}{n_{\varphi}},\; l=1,...,\frac{n_{\varphi}}{2}.
\]
$\Lambda$ is the Cartesian product of $\Lambda_k$, $k\in\mathcal{C}\cup\mathcal{S}$. It is a zonotope with $\frac{n_{\varphi}}{2}(|\mathcal{S}|+|\mathcal{C}|)$ generators, each formed by appropriately zero-padding each $g_l$. We scale the noise vector by $\Sigma=0.1\bm{I}$ before adding it to the sources.

There is a relay at Bus 2 on the line from Bus~2 to Bus~3. The line's zero-sequence compensation factor is $k=2$, a typical value~\cite{horowitz2022power}. The maximum fault resistance is $r_{\textrm{F}}=1$, and the close-in fault threshold is $\underbar{m}_z=0.15$. In calculating the network matrices, we use the nominal fault location and resistance $\hat{m}_z=\hat{m}_r=1/2$.

The computations were carried out in Python, and the figures were created with Matplotlib~\cite{hunter2007matplotlib}. We computed each $\vec{\mathcal{Z}}_{\textrm{A}}^{\eta}$ via the procedure in Section~\ref{sec:projection}. Computing $\hat{\mathcal{Z}}_{\textrm{A}}^{\eta}$ consists of finding the G-representation in $\mathbb{C}$ using Lemma~\ref{lemma:etazono2} and then converting it to V-representation using the algorithm in Appendix~\ref{app:zono}. Computing $\vec{\mathcal{Z}}_{\textrm{A}}^{\eta}$ for all eleven faults took about 0.9 seconds, and largely depends on the number of sides in the uncertainty sets, $n_{\varphi}$. Computing the zonogon relaxation, $\hat{\mathcal{Z}}_{\textrm{A}}^{\eta}$, for all eleven faults took about 1 millisecond.

Figure~\ref{fig:ZMinkowskiSum} diagrams the Minkowski sum, $\vec{\mathcal{Z}}_{\textrm{A}}^{\textrm{ab}}=\mathcal{Z}_{\textrm{T}}\oplus\vec{\mathcal{Z}}_{\textrm{F}}^{\textrm{ab}}$. Here $\mathcal{Z}_{\textrm{T}}$ is the leftmost line segment; $\frac{r_{\textrm{F}}}{i_{\textrm{A}}^{\eta}} \mathcal{I}_{\textrm{F}}^{\textrm{ab}}$ is the ellipse in the bottom right; and $\mathcal{Z}_{\textrm{F}}^{\textrm{ab}}$ the convex hull of $\frac{r_{\textrm{F}}}{i_{\textrm{A}}^{\textrm{ab}}} \mathcal{I}_{\textrm{F}}^{\textrm{ab}}$ and zero, as in Lemma~\ref{lemma:VF0}.

\begin{figure}[h]
		\centering
\includegraphics[width=\columnwidth]{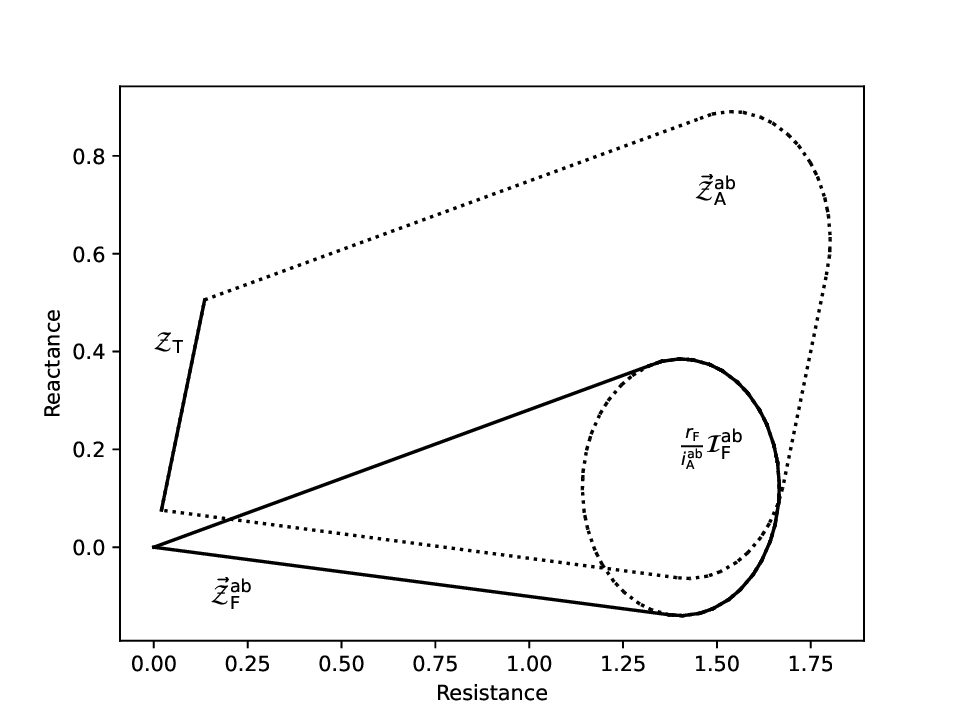}
	\caption{Diagram of the Minkowski sum, $\vec{\mathcal{Z}}_{\textrm{A}}^{\textrm{ab}}=\mathcal{Z}_{\textrm{T}}\oplus\vec{\mathcal{Z}}_{\textrm{F}}^{\textrm{ab}}$.}
	\label{fig:ZMinkowskiSum}
\end{figure}

Figure~\ref{fig:ZSets} shows $\vec{\mathcal{Z}}_{\textrm{A}}^{\eta}$, its zonogon relaxation, $\hat{\mathcal{Z}}_{\textrm{A}}^{\eta}$, and the cut $\textrm{Im}[z]\textrm{Re}[z_{\textrm{A}}^{\eta}]\geq \textrm{Re}[z]\textrm{Im}[z_{\textrm{A}}^{\eta}]$ for ag and ab faults.

\begin{figure}[h]
		\centering
\includegraphics[width=\columnwidth]{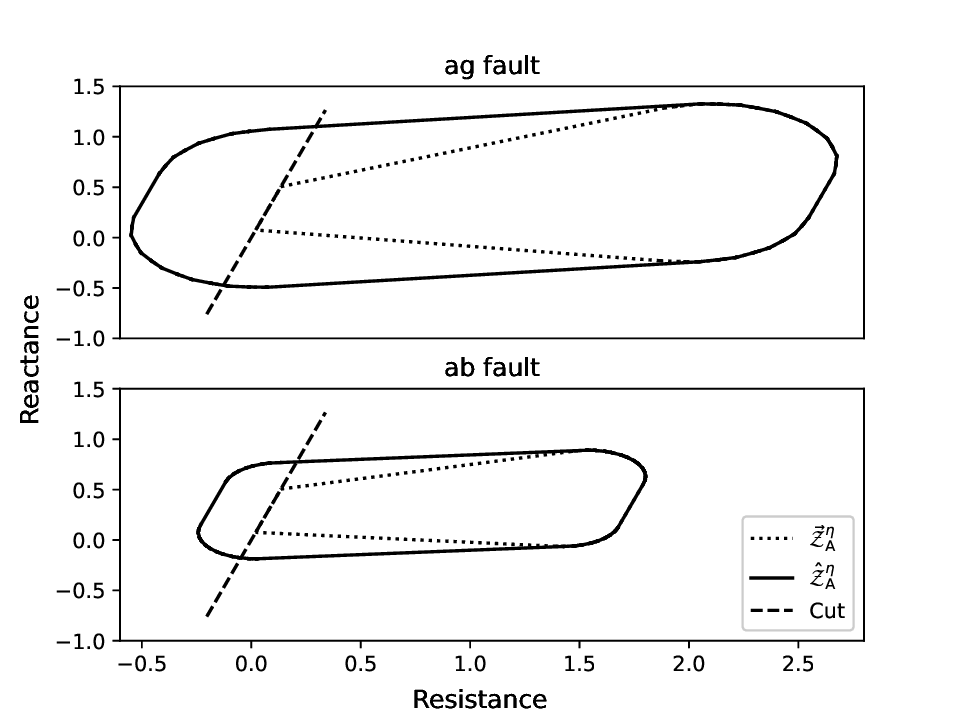}
	\caption{For ag and ab faults, the post-test uncertainty sets, their zonogon relaxations, and the cut $\textrm{Im}[z]\textrm{Re}[z_{\textrm{A}}^{\eta}]\geq \textrm{Re}[z]\textrm{Im}[z_{\textrm{A}}^{\eta}]$.}
	\label{fig:ZSets}
\end{figure}

\subsection{Relation to quadrilateral characteristics}\label{sec:quadrilateral}

We now discuss how our post-test uncertainty sets relate to the quadrilateral relay characteristic, which is considered most suitable for IBR-rich grids~\cite{kasztenny2022distance}. The quadrilateral characteristic is motivated by adding the uncertainty of each term in (\ref{eq:zxi2})~\cite{kasztenny2008fundamentals,ziegler2011numerical}. This is done in part by Minkowski sum, although, to our knowledge, it has never been referred to as such. The main idea is to add the uncertainty of the first two terms, $\mathcal{Z}_{\textrm{T}}\oplus \left\{\left.m_r\xi^{\eta} \right|m_r\in[0,1]\right\}$, which produces a parallelogram in $\mathbb{C}$. An ad hoc adjustment is then added for the third term in (\ref{eq:zxi2}). This adjustment is predominantly real because in SG-dominated grids, the voltages at the line terminals tend to have similar angles shortly after a fault. This assumption does not hold in IBR-dominated grids, potentially leading to misoperation~\cite{banaiemoqadam2019control,baeckeland2022distance}.

We may view the quadrilateral characteristic (and any other relay characteristic) as a type of post-test uncertainty set. $\vec{\mathcal{Z}}_{\textrm{A}}^{\eta}$ is the exact Minkowski sum of the post-test uncertainty, which we thus expect to be more accurate than a quadrilateral characteristic (if the network model is accurate). $\vec{\mathcal{Z}}_{\textrm{A}}^{\eta}$ is moreover an adaptive relay characteristics~\cite{horowitz2022power}, in that it depends on $i_{\textrm{L}}$ through $\xi^{\eta}$ and $\Xi^{\eta}$, and so must be recomputed online. As seen in Section~\ref{sec:example:postest}, this can be done efficiently, particularly for the zonogon relaxations, which can be computed in under a millisecond. If we do want to compute the characteristic offline, we could use an approximate, nominal value of $i_{\textrm{L}}$ in $\xi^{\eta}$ and $\Xi^{\eta}$.

A key difference is that in our uncertainty sets, we explicitly model how the terminal currents depend on the network and sources. This allows for any mix of SGs and IBRs, and enables us to systematically design auxiliary signals in Section~\ref{sec:auxiliary}. It is the formalization in terms of the Minkowski sum that enables us to integrate this additional level of detail.

\subsection{Underreach and overreach}

Underreach is a false negative error in which $m_z\in \left[\underbar{m}_z,1\right]$ and $z_{\textrm{A}}=m_zz+z_{\textrm{F}} \notin \vec{\mathcal{Z}}_{\textrm{A}}$, leading the relay to incorrectly conclude that the fault is beyond its line. Overreach is a false positive error in which $m_z > 1$ and $z_{\textrm{A}}=m_zz+z_{\textrm{F}} \in \vec{\mathcal{Z}}_{\textrm{A}}$, leading the relay to incorrectly conclude that there is a fault on its line. They are unavoidable as long as there is uncertainty in both fault location and the voltage drop across the fault.

To reduce error, a distance relay typically uses multiple characteristics, together known as a grading plan~\cite{ziegler2011numerical}. For instance, it might first check a smaller one, for which overreach is unlikely, and then a larger one, for which underreach is unlikely. In this way, it first checks for faults that are unambiguously on its line, allowing relays on other lines to do the same, and then checks for faults that could be on its own or an adjacent line.

The uncertainty sets we have constructed here allow for overreach, but not underreach. This is because we have assumed that $m_z\in\left[\underbar{m}_z,1\right]$---if the fault is anywhere on the relay's line (and not close-in), the impedance it measures must be in $\vec{\mathcal{Z}}_{\textrm{A}}$.

We can alternatively construct characteristics for which underreach can occur, but not overreach, by assuming that $m_z\in\left[\underbar{m}_z,\bar{m}_z\right]$, where $\bar{m}_z<1$. We then want $\bar{m}_z$ to be as large as possible so as to shrink the set of outcomes leading to underreach. This value is given by minimizing $\bar{m}_z$ subject to $z\in\vec{\mathcal{Z}}_{\textrm{A}}$, i.e., that the line impedance is in the characteristic---if $\bar{m}_z$ is any larger, we allow overreach, and if it is any smaller, we ignore realizations of $z_{\textrm{A}}$ that unambiguously correspond to $m_z\in\left[\underbar{m}_z,1\right]$.

\section{Pre-test uncertainty sets}\label{sec:exante}

Pre-test, $i_{\textrm{L}}$ or $v_{\textrm{L}}$ are variables because the relay has not yet observed them. We will use pre-test uncertainty sets to optimize auxiliary signals in Section~\ref{sec:auxiliary}. We hereon indicate pre-test uncertainty sets with a breve. We characterize the voltage uncertainty set, $\breve{\mathcal{V}}_{\textrm{A}}^{\eta}$ in (\ref{eq:KVLMSv}) for each $\eta\in\mathbb{F}$.

We do not characterize the pre-test impedance uncertainty set, $\breve{\mathcal{Z}}_{\textrm{A}}^{\eta}$, for two reasons. Because the local currents, $i_{\textrm{L}}$, are now uncertain, $z_{\textrm{F}}$ contains a nonlinear, rational term, which is difficult to analyze; and, impedances are not basic variables. As a result, e.g., $z_{\textrm{A}}^{\textrm{ag}}$ is not explicitly coupled to $z_{\textrm{A}}^{\textrm{ab}}$, despite both depending on $v^{\textrm{a}}_{\textrm{L}}$. These characteristics make impedance uncertainty sets poorly suited for optimizing auxiliary signals over multiple fault models.

For any fault $\eta\in\mathbb{F}\setminus \textrm{N}$, we can write (\ref{eq:KVLv}) in the form
\begin{align}
\psi^{\eta}v_{\textrm{L}} &= \Omega_z^{\eta}u_z + \Omega_r^{\eta}u_r,\label{eq:vlomega}
\end{align}
where $\Omega_z^{\eta}$ and $\Omega_r^{\eta}$ are matrices that we will compute later, and $u_z$ and $u_r$ represent the products $m_zu$ and $m_ru$, respectively. When there is no fault, we have $v_{\textrm{L}} = \Omega^{\textrm{N}}u$, and will similarly compute $\Omega^{\textrm{N}}$ later. For $\eta\in\mathbb{F}\setminus \textrm{N}$, the pre-test uncertainty sets for the line and fault voltage drops in (\ref{eq:KVLMSv}) are
\begin{align*}
\breve{\mathcal{V}}_{\textrm{T}}^{\eta} &= \{\Omega_z^{\eta}u_z\;\left|\; u_z\in\mathcal{U}_{m_z}\right.\}\\
\breve{\mathcal{V}}_{\textrm{F}}^{\eta}
&= \{\Omega_r^{\eta}u_r \;\left|\;  u_r\in\mathcal{U}_{m_r} \right.\}.
\end{align*}

$\mathcal{U}_{m_z}$ and $\mathcal{U}_{m_r}$ both contain bilinear terms. To contain them in one place, define
\begin{align*}
\mathcal{W}&=\left\{ (u_z,u_r)\;|\;u_z\in\mathcal{U}_{m_z},\;u_r\in\mathcal{U}_{m_r} \right\}\\
&= \left\{
(u_z,u_r)\;|\;u_z=m_z\begin{bmatrix}
i^{\circ}_{\mathcal{C}}\\v^{\circ}_{\mathcal{S}}
\end{bmatrix} + m_z\Sigma\lambda,\right.\\
&\hspace{22mm} u_r=m_r\begin{bmatrix}
i^{\circ}_{\mathcal{C}}\\v^{\circ}_{\mathcal{S}}
\end{bmatrix} + m_r\Sigma\lambda,\\
&\hspace{22mm}\left.\lambda\in\Lambda,m_z\in\left[\underbar{m}_z,1\right],m_r\in[0,1]\right\}.
\end{align*}
Unlike the post-test uncertainty in Section~\ref{sec:expost}, we cannot eliminate the bilinearities in $\mathcal{W}$ with a change of variables. We will construct convex approximations of $\mathcal{W}$ in Section~\ref{sec:convex}.

The pre-test uncertainty set for the apparent voltage in scenario $\eta\in\mathbb{F}\setminus\textrm{N}$ is
\begin{align*}
\breve{\mathcal{V}}_{\textrm{A}}^{\eta} &= \breve{\mathcal{V}}_{\textrm{T}}^{\eta} \oplus \breve{\mathcal{V}}_{\textrm{F}}^{\eta}\\
&= \left\{\left.\Omega_z^{\eta}u_z + \Omega_r^{\eta}u_r \;\right|\; (u_z,u_r)\in\mathcal{W} \right\}.
\end{align*}
In Appendix~\ref{app:sets}, we write the pre-test uncertainty sets, their intersections, and their duals as explicit lists of constraints.

\subsection{LG faults}\label{sec:pre:LG}

In (\ref{eq:lgkvl}), we wrote KVL for the phase a-to-ground fault loop, which consisted of the voltage drops across the line and across the fault. The pre-test uncertainty set for the line voltage drop is
\begin{align*}
\breve{\mathcal{V}}_{\textrm{T}}^{\textrm{ag}} &=  \{m_zz\left(i_{\textrm{L}}^{\textrm{a}}+ki_{\textrm{L}}^0\right)\;\left|\;m_z\in[\underbar{m}_z,1],\; i_{\textrm{L}}\in \mathcal{I}^{\textrm{ag}}_{\textrm{L}}\right.\}\\
&= \{\underset{=\Omega_z^{\textrm{ag}}}{\underbrace{z\left(e^{\textrm{a}}+ke^0\right)\Gamma^{\textrm{ag}}}}u_z\;\left|\; u_z\in\mathcal{U}_{m_z}\right.\}.
\end{align*}
The uncertainty set of the fault voltage drop is
\begin{align*}
\breve{\mathcal{V}}_{\textrm{F}}^{\textrm{ag}}
&= \left\{\left.m_rr_{\textrm{F}}\left(i_{\textrm{L}}^{\textrm{a}}+ i_{\textrm{R}}^{\textrm{a}}\right) \;\right|\;  m_r\in\left[0,1\right] ,\; i_{\textrm{L}}\in \mathcal{I}^{\textrm{ag}}_{\textrm{L}}  ,\; i_{\textrm{R}}\in \mathcal{I}^{\textrm{ag}}_{\textrm{R}}  \right\}\\
&= \{\underset{=\Omega_r^{\textrm{ag}}}{\underbrace{r_{\textrm{F}}e^{\textrm{a}}\left(\Gamma^{\textrm{ag}}+ \Theta^{\textrm{ag}}\right)}}u_r \;\left|\;  u_r\in\mathcal{U}_{m_r} \right.\}.
\end{align*}

\subsection{LL faults}
In (\ref{eq:llkvl}), we wrote KVL for the phase a-to-phase b fault loop. The pre-test uncertainty sets for the line and fault voltage drops are
\begin{align*}
\breve{\mathcal{V}}_{\textrm{T}}^{\textrm{ab}} &=  \{\left.m_zz\left(i_{\textrm{L}}^{\textrm{a}}-i_{\textrm{L}}^{\textrm{b}}\right)\;\right|\;m_z\in[\underbar{m}_z,1],\; i_{\textrm{L}}\in \mathcal{I}^{\textrm{ab}}_{\textrm{L}}\}\\
&= \{\underset{=\Omega_z^{\textrm{ab}}}{\underbrace{z\left(e^{\textrm{a}}-e^{\textrm{b}}\right)\Gamma^{\textrm{ab}}}}u_z\;\left|\; u_z\in\mathcal{U}_{m_z}\right.\}\\
\breve{\mathcal{V}}_{\textrm{F}}^{\textrm{ab}} &= \left\{\left.(m_rr_{\textrm{F}}/2)\left(i_{\textrm{L}}^{\textrm{a}}-i_{\textrm{L}}^{\textrm{b}} + i_{\textrm{R}}^{\textrm{a}} -  i_{\textrm{R}}^{\textrm{b}}\right)  \;\right|\; \right.\\
&\quad\quad\left.m_r\in[0,1],\;i_{\textrm{L}}\in \mathcal{I}^{\textrm{ab}}_{\textrm{L}}  ,\; i_{\textrm{R}}\in \mathcal{I}^{\textrm{ab}}_{\textrm{R}}  \right\}\\
&= \{\underset{=\Omega_r^{\textrm{ab}}}{\underbrace{(r_{\textrm{F}}/2)\left(\left(e^{\textrm{a}}-e^{\textrm{b}}\right)\Gamma^{\textrm{ab}} + \left(e^{\textrm{a}} -  e^{\textrm{b}}\right)\Theta^{\textrm{ab}}\right)}}u_r  \;\left|\; u\in\mathcal{U}_{m_r}  \right.\}.
\end{align*}

\subsection{Normal operation}\label{sec:exante:unfaulted}

The pre-test uncertainty set for normal operation, which we modeled in Section~\ref{sec:nofault}, is
\begin{align*}
\breve{\mathcal{V}}_{\textrm{A}}^{\textrm{N}}  &= z\mathcal{I}^{\textrm{N}}_{\textrm{L}} + \mathcal{V}^{\textrm{N}}_{\textrm{R}} \\
&= \{\underset{=\Omega^{\textrm{N}}}{\underbrace{\left(z\Gamma^{\textrm{N}} +  \Psi^\textrm{N}\right)}}u  \;\left|\; u\in\mathcal{U}  \right.\}.
\end{align*}

\subsection{Intersections}\label{sec:intersection}

When we design auxiliary signals in Section~\ref{sec:protection}, we will make use of the intersections of pre-test uncertainty sets. We take these intersections in the space of the variables $v_{\textrm{L}}$ and $i_{\textrm{L}}$ because they are observed post-test (as opposed to, $m_z$, $m_r$, and $i_{\textrm{R}}$, which remain uncertain).

Consider the pre-test uncertainty sets $\breve{\mathcal{V}}_{\textrm{A}}^{\eta_1}$ and $\breve{\mathcal{V}}_{\textrm{A}}^{\eta_2}$, $(\eta_1,\eta_2)\in\mathbb{F}_2$. With a slight abuse of notation, we use superscripts to indicate to which set a variable belongs, e.g., $m_z^{\eta_1}$ and $m_z^{\eta_2}$ are variables for $\breve{\mathcal{V}}_{\textrm{A}}^{\eta_1}$ and $\breve{\mathcal{V}}_{\textrm{A}}^{\eta_2}$, respectively.

We take the intersection by setting $v_{\textrm{L}}^{\eta_1}=v_{\textrm{L}}^{\eta_2}$ and $i_{\textrm{L}}^{\eta_1}=i_{\textrm{L}}^{\eta_2}$. We apply the former using the relation $v_{\textrm{A}}^{\eta}=\psi^{\eta}v_{\textrm{L}}$, $\eta\in\{\eta_1,\eta_2\}$. Substituting, we rewrite the latter as
\begin{align}
\Gamma^{\eta_1}\left(\begin{bmatrix}
i^{\circ}_{\mathcal{C}}\\v^{\circ}_{\mathcal{S}}
\end{bmatrix} + \Sigma\lambda^{\eta_1}\right)&=\Gamma^{\eta_2}\left(\begin{bmatrix}
i^{\circ}_{\mathcal{C}}\\v^{\circ}_{\mathcal{S}}
\end{bmatrix} + \Sigma\lambda^{\eta_2}\right)\label{eq:iLiL},
\end{align}
which is linear. We denote intersection of $\breve{\mathcal{V}}_{\textrm{A}}^{\eta_1}$ and $\breve{\mathcal{V}}_{\textrm{A}}^{\eta_2}$ by
\begin{align*}
\breve{\mathcal{P}}^{\eta_1,\eta_2} &= \left\{v_{\textrm{L}} \;\left|\;
\psi^{\eta_1}v_{\textrm{L}} \in \breve{\mathcal{V}}_{\textrm{A}}^{\eta_1},\;
\psi^{\eta_2}v_{\textrm{L}} \in \breve{\mathcal{V}}_{\textrm{A}}^{\eta_2},(\ref{eq:iLiL})
\right.\right\}.\label{eq:Pinter}
\end{align*}

\subsection{Convex approximations}\label{sec:convex}

The pre-test uncertainty sets and their intersections are nonconvex because they depend on $\mathcal{W}$, which has nonconvex bilinearities. We give relaxations of $\mathcal{W}$ in Section~\ref{sec:convex:relax} and a restriction and approximation in Section~\ref{sec:restapp}. In general, if we replace $\mathcal{W}$ with a relaxation (restriction), we obtain relaxations (restrictions) of $\breve{\mathcal{V}}_{\textrm{A}}^{\eta}$, $\eta\in\mathbb{F}$. We comment that relaxations may be most appropriate in this context, as they correspond to larger, and hence more conservative uncertainty sets. We discuss intersections in Section~\ref{sec:interconvex}.

We use the following relaxations because they are simple, and note there are other choices, e.g., McCormick relaxations~\cite{mccormick1976computability} and the semidefinite relaxation of our prior work~\cite{taylor2024fault}.

\subsubsection{Three relaxations}\label{sec:convex:relax}
Observe that for a generic $m\in[\underbar{m},1]$, we can write 
\begin{align*}
\mathcal{U}_m &= \left\{\left.
m\begin{bmatrix}
i^{\circ}_{\mathcal{C}}\\v^{\circ}_{\mathcal{S}}
\end{bmatrix} + \Sigma\lambda_m\; \right| \; \lambda_m\in\Lambda_m,\;m\in[\underbar{m},1]\right\}\\
\Lambda_m &= \left\{m\Lambda \;|\;m\in[\underbar{m},1]\right\}.
\end{align*}

We obtain a simple relaxation by defining
\begin{align*}
\mathcal{U}_m^{\textrm{Rel}} &= \left\{\left.
m\begin{bmatrix}
i^{\circ}_{\mathcal{C}}\\v^{\circ}_{\mathcal{S}}
\end{bmatrix} + \Sigma\lambda\; \right| \; \lambda\in\Lambda,\;m\in[\underbar{m},1]\right\},
\end{align*}
which is convex. Here we have replaced the term $\Lambda_m$ in $\mathcal{U}_m$ with $\Lambda$. Let
\[
\mathcal{W}^{\textrm{Rel},1} = \left\{ (u_z,u_r) \;|\; u_z\in\mathcal{U}_{m_z}^{\textrm{Rel}},\;u_r\in\mathcal{U}_{m_r}^{\textrm{Rel}}\right\}.
\]
Observe that $\Lambda_m\subseteq\Lambda$ for any $m\in[\underbar{m},1]$. This implies that $\mathcal{U}_m\subseteq\mathcal{U}_m^{\textrm{Rel}}$, which implies the below lemma.
\begin{lemma}
$\mathcal{W}\subseteq\mathcal{W}^{\textrm{Rel},1}$.
\end{lemma}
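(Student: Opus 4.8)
The plan is to prove the set inclusion $\mathcal{W}\subseteq\mathcal{W}^{\textrm{Rel},1}$ by chasing an arbitrary element through the definitions. The key observation, already flagged in the text immediately before the lemma, is the elementwise inclusion $\Lambda_m\subseteq\Lambda$ for every $m\in[\underbar{m},1]$, which lifts to $\mathcal{U}_m\subseteq\mathcal{U}_m^{\textrm{Rel}}$, and this in turn forces the product-set inclusion. So the proof is essentially a two-line containment argument; there is no genuine obstacle, only the bookkeeping of keeping track of which variable ranges over which set.

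Concretely, I would first establish $\Lambda_m\subseteq\Lambda$. By definition $\Lambda_m=\{m\Lambda\mid m\in[\underbar{m},1]\}$, so a point $\mu\in\Lambda_m$ has the form $\mu=m\lambda$ for some $\lambda\in\Lambda$ and $m\in[\underbar{m},1]$. Since $\Lambda$ is either a unit ball or a zonotope (hence convex and containing the origin), and $m\in[\underbar{m},1]\subseteq[0,1]$, the scaled point $m\lambda$ lies on the segment between $0\in\Lambda$ and $\lambda\in\Lambda$, and therefore $m\lambda\in\Lambda$ by convexity. This gives $\Lambda_m\subseteq\Lambda$.

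Next I would propagate this inclusion to $\mathcal{U}_m\subseteq\mathcal{U}_m^{\textrm{Rel}}$. Take any $u\in\mathcal{U}_m$; it can be written as $u=m[i^{\circ}_{\mathcal{C}};v^{\circ}_{\mathcal{S}}]+\Sigma\lambda_m$ with $\lambda_m\in\Lambda_m$ and $m\in[\underbar{m},1]$. Because $\lambda_m\in\Lambda_m\subseteq\Lambda$, the same point $u$ is an admissible element of $\mathcal{U}_m^{\textrm{Rel}}$, whose definition differs only in drawing the noise term from the larger set $\Lambda$ rather than $\Lambda_m$ (the affine part $m[i^{\circ}_{\mathcal{C}};v^{\circ}_{\mathcal{S}}]$ is identical in both). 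Hence $\mathcal{U}_m\subseteq\mathcal{U}_m^{\textrm{Rel}}$, and specializing to $m=m_z$ and $m=m_r$ yields $\mathcal{U}_{m_z}\subseteq\mathcal{U}_{m_z}^{\textrm{Rel}}$ and $\mathcal{U}_{m_r}\subseteq\mathcal{U}_{m_r}^{\textrm{Rel}}$.

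Finally I would conclude by the monotonicity of the product-set construction. An arbitrary $(u_z,u_r)\in\mathcal{W}$ satisfies $u_z\in\mathcal{U}_{m_z}$ and $u_r\in\mathcal{U}_{m_r}$; by the two inclusions just obtained, $u_z\in\mathcal{U}_{m_z}^{\textrm{Rel}}$ and $u_r\in\mathcal{U}_{m_r}^{\textrm{Rel}}$, so $(u_z,u_r)\in\mathcal{W}^{\textrm{Rel},1}$. This proves $\mathcal{W}\subseteq\mathcal{W}^{\textrm{Rel},1}$. The only subtlety worth stating carefully is that in $\mathcal{W}$ the \emph{same} $\lambda$ and an interdependence between $m_z$, $m_r$ may implicitly be shared, whereas in $\mathcal{W}^{\textrm{Rel},1}$ the two coordinates are decoupled; but since the relaxation only enlarges each factor independently, any joint point of $\mathcal{W}$ is certainly a joint point of the larger product, so the decoupling works in the direction we need and presents no difficulty.
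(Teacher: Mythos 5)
Your proof is correct and follows essentially the same route as the paper, which establishes the lemma via the chain $\Lambda_m\subseteq\Lambda$ $\Rightarrow$ $\mathcal{U}_m\subseteq\mathcal{U}_m^{\textrm{Rel}}$ $\Rightarrow$ $\mathcal{W}\subseteq\mathcal{W}^{\textrm{Rel},1}$. You simply make explicit the details the paper leaves implicit (convexity of $\Lambda$ and $0\in\Lambda$ for the first inclusion, and the fact that decoupling $\lambda$ across the two factors only enlarges the set), which is a faithful elaboration rather than a different argument.
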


We now derive a second convex relaxation. Under ellipsoidal uncertainty, $\Lambda_m$ has the form
\begin{align*}
\Lambda_m &= \left\{m\lambda \;|\;  \left\|\lambda\right\|_2\leq 1,\;m\in[\underbar{m},1]\right\}\\
&= \left\{\lambda_m \;|\;  \left\|\lambda_m\right\|_2\leq m,\;m\in[\underbar{m},1]\right\},
\end{align*}
and under zonotopic uncertainty,
\begin{align*}
\Lambda_m &= \left\{m\lambda \;|\;  P\lambda\leq \bm{1},\;m\in[\underbar{m},1]\right\}\\
&= \left\{\lambda_m \;|\;  P\lambda_m\leq m\bm{1},\;m\in[\underbar{m},1]\right\}.
\end{align*}
For ellipsoidal or zonotopic uncertainty, $\mathcal{U}_m$ and $\Lambda_m$ are convex when written this way. However, to use this representation for $\Lambda_{m_z}$ and $\Lambda_{m_r}$ simultaneously, we must add a nonconvex constraint to ensure that both sets depend on the same underlying uncertainty, $\lambda\in\Lambda$.

To obtain a convex relaxation, we express this representation of $\mathcal{W}$ (with norm-bounded uncertainty) as the below list of constraints.
\begin{subequations}
\begin{align}
&m_z\in[\underbar{m}_z,1],m_r\in[0,1]\label{eq:mzmr}\\
&u_z = m_z\begin{bmatrix}
i^{\circ}_{\mathcal{C}}\\v^{\circ}_{\mathcal{S}}
\end{bmatrix} + \Sigma\lambda_{m_z}\\
&u_r = m_r\begin{bmatrix}
i^{\circ}_{\mathcal{C}}\\v^{\circ}_{\mathcal{S}}
\end{bmatrix} + \Sigma\lambda_{m_r}\\
&\left\|\lambda_{m_z}\right\|_2 \leq m_z\label{eq:uz}\\
&\left\|\lambda_{m_r}\right\|_2 \leq m_r\label{eq:ur}\\
&\frac{1}{m_z}\lambda_{m_z}=\frac{1}{m_r}\lambda_{m_r}.\label{eq:bllambda}
\end{align}
\end{subequations}
Note that with polyhedral instead of ellipsoidal uncertainty, we replace (\ref{eq:uz}) and (\ref{eq:ur}) with the linear constraints $P\lambda_{m_z} \leq m_z\bm{1}$ and $P\lambda_{m_r} \leq m_r\bm{1}$. In either case, the only nonconvexity is (\ref{eq:bllambda}), which serves the role of ensuring that $\lambda$ is the same in both $\Lambda_{m_z}$ and $\Lambda_{m_r}$. Define
\begin{align*}
\mathcal{W}^{\textrm{Rel},2} = \left\{ (u_z,u_r) \;|\; (u_z,u_r) \textrm{ are feasible for (\ref{eq:mzmr})-(\ref{eq:ur})}\right\}.
\end{align*}
$\mathcal{W}^{\textrm{Rel},2}$ is $\mathcal{W}$ without the nonconvex constraint, (\ref{eq:bllambda}), which implies the following result.
\begin{lemma}
$\mathcal{W}\subseteq \mathcal{W}^{\textrm{Rel},2}$.
\end{lemma}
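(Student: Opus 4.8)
The plan is to exploit the fact, established in the lines preceding the lemma, that the constraint list (\ref{eq:mzmr})--(\ref{eq:bllambda}) is an exact representation of $\mathcal{W}$ (in the norm-bounded case; the zonotopic case merely replaces (\ref{eq:uz})--(\ref{eq:ur}) with the stated linear inequalities). By definition, $\mathcal{W}^{\textrm{Rel},2}$ is the projection onto $(u_z,u_r)$ of the feasible set of the \emph{same} list with the single constraint (\ref{eq:bllambda}) deleted. Since removing a constraint can only enlarge a feasible set, the inclusion is immediate, and I would state this as the one-line core of the argument.

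For a self-contained verification that does not merely cite the representation, I would argue directly. Take any $(u_z,u_r)\in\mathcal{W}$. By definition there exist $m_z\in[\underbar{m}_z,1]$, $m_r\in[0,1]$, and a \emph{single} $\lambda\in\Lambda$ with $u_z=m_z\begin{bmatrix} i^{\circ}_{\mathcal{C}}\\v^{\circ}_{\mathcal{S}} \end{bmatrix}+m_z\Sigma\lambda$ and $u_r=m_r\begin{bmatrix} i^{\circ}_{\mathcal{C}}\\v^{\circ}_{\mathcal{S}} \end{bmatrix}+m_r\Sigma\lambda$. I would then set $\lambda_{m_z}:=m_z\lambda$ and $\lambda_{m_r}:=m_r\lambda$, so that $\Sigma\lambda_{m_z}=m_z\Sigma\lambda$ and $\Sigma\lambda_{m_r}=m_r\Sigma\lambda$; this recasts $u_z$ and $u_r$ in exactly the affine form appearing in the definition of $\mathcal{W}^{\textrm{Rel},2}$, and (\ref{eq:mzmr}) holds by hypothesis.

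The only remaining thing to check is the pair of norm bounds (\ref{eq:uz})--(\ref{eq:ur}). These follow from $\lambda\in\Lambda$: in the norm-bounded case $\|\lambda\|_2\le1$ gives $\|\lambda_{m_z}\|_2=m_z\|\lambda\|_2\le m_z$ and likewise $\|\lambda_{m_r}\|_2\le m_r$; in the zonotopic case $P\lambda\le\bm{1}$ gives $P\lambda_{m_z}=m_zP\lambda\le m_z\bm{1}$ and similarly for $\lambda_{m_r}$, using $m_z,m_r\ge0$. Hence $(u_z,u_r)$ satisfies (\ref{eq:mzmr})--(\ref{eq:ur}), so $(u_z,u_r)\in\mathcal{W}^{\textrm{Rel},2}$.

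I do not expect any genuine obstacle; the entire content of the lemma is that the construction $\lambda_{m_z}=m_z\lambda$, $\lambda_{m_r}=m_r\lambda$ \emph{automatically} satisfies the dropped constraint (\ref{eq:bllambda}), since both sides equal $\lambda$ whenever $m_z,m_r>0$, so every element of $\mathcal{W}$ survives the removal of (\ref{eq:bllambda}). The only mild care needed is the degenerate cases $m_z=0$ or $m_r=0$, where (\ref{eq:bllambda}) is undefined but the corresponding $\lambda$-variable is zero and the retained norm constraint holds trivially.
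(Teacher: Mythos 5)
Your proof is correct and takes essentially the same route as the paper, which simply observes that $\mathcal{W}^{\textrm{Rel},2}$ is $\mathcal{W}$ with the nonconvex constraint (\ref{eq:bllambda}) deleted, so the inclusion is immediate. Your explicit witness construction $\lambda_{m_z}=m_z\lambda$, $\lambda_{m_r}=m_r\lambda$ just spells out, more carefully than the paper does, why the original bilinear definition of $\mathcal{W}$ embeds into the change-of-variables representation.
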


We obtain a tighter convex relaxation by taking the intersection of $\mathcal{W}^{\textrm{Rel},1}$ and $\mathcal{W}^{\textrm{Rel},2}$:
\[
\mathcal{W}^{\textrm{Rel},3} = \mathcal{W}^{\textrm{Rel},1}\cap\mathcal{W}^{\textrm{Rel},2}.
\]

\subsubsection{Restriction and approximation}\label{sec:restapp}
We can write $\mathcal{W}$ (with norm-bounded uncertainty) yet another (equivalent) way as
\begin{subequations}
\begin{align}
&m_{z,1},m_{z,2}\in[\underbar{m}_z,1],\\
&m_{r,1},m_{r,2}\in[0,1]\\
&m_{z,1}=m_{z,2},\;m_{r,1}=m_{r,2}\label{eq:m1m2}\\
&u_z = m_{z,1}\begin{bmatrix}
i^{\circ}_{\mathcal{C}}\\v^{\circ}_{\mathcal{S}}
\end{bmatrix} + \sqrt{\frac{m_{z,2}}{m_{r,2}}}\Sigma\lambda_m\label{eq:mzr1}\\
&u_r = m_{r,1}\begin{bmatrix}
i^{\circ}_{\mathcal{C}}\\v^{\circ}_{\mathcal{S}}
\end{bmatrix} + \sqrt{\frac{m_{r,2}}{m_{z,2}}}\Sigma\lambda_m\label{eq:mzr2}\\
&\left\|\lambda_m\right\|_2 \leq \sqrt{m_{z,1}m_{r,1}}.\label{eq:soc}
\end{align}
For zonotopic uncertainty, we replace (\ref{eq:soc}) with
\begin{align}
P\lambda_m\leq \sqrt{m_{z,1}m_{r,1}}\bm{1}.\label{eq:soc1}
\end{align}
\end{subequations}
The repeated variables in this formulation allow us to construct the following two approximations.
\begin{itemize}
\item We obtain a convex restriction by adding the constraint $m_{r,2}=m_{z,2}$, so that (\ref{eq:mzr1}) and (\ref{eq:mzr2}) become linear. We denote this $\mathcal{W}^{\textrm{Res}}$. Clearly, $\mathcal{W}^{\textrm{Res}}\subseteq\mathcal{W}$.
\item We obtain a convex approximation by removing (\ref{eq:m1m2}) from $\mathcal{W}^{\textrm{Res}}$. In this case, (\ref{eq:soc}) is a hyperbolic constraint, which can be written as a second-order cone constraint~\cite{Boyd1998SOCP}. Constraint (\ref{eq:soc1}) can also be written as collection of hyperbolic constraints by squaring each row. This approximation, which we denote $\mathcal{W}^{\textrm{SOC}}$, is neither a relaxation or a restriction because it involves adding and removing constraints.
\end{itemize}

\subsubsection{Summary of inclusions}
The relationships between the convex approximations in this section are summarized the inclusions
\[
\mathcal{W}^{\textrm{Res}} \subseteq \mathcal{W}   \subseteq \mathcal{W}^{\textrm{Rel},3}  \subseteq\left\{\mathcal{W}^{\textrm{Rel},1},\mathcal{W}^{\textrm{Rel},2}\right\},
\]
and the following lemma.
\begin{lemma}\label{lemma:WSOC}
$\mathcal{W}^{\textrm{Res}}\subseteq\mathcal{W}^{\textrm{SOC}}\subseteq\mathcal{W}^{\textrm{Rel},1}$.
\end{lemma}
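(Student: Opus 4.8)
The plan is to establish the two inclusions $\mathcal{W}^{\textrm{Res}}\subseteq\mathcal{W}^{\textrm{SOC}}$ and $\mathcal{W}^{\textrm{SOC}}\subseteq\mathcal{W}^{\textrm{Rel},1}$ separately. For both, I would work with the norm-bounded case explicitly and note that the zonotopic case follows by the same argument with the analogous linear constraints $P\lambda_m\leq\sqrt{m_{z,1}m_{r,1}}\bm{1}$ in place of the second-order cone constraint.

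For the first inclusion, I would take an arbitrary $(u_z,u_r)\in\mathcal{W}^{\textrm{Res}}$. By definition, $\mathcal{W}^{\textrm{Res}}$ is obtained from the system (\ref{eq:mzr1})--(\ref{eq:soc}) by adding the constraints (\ref{eq:m1m2}) \emph{and} $m_{r,2}=m_{z,2}$. The set $\mathcal{W}^{\textrm{SOC}}$ is obtained from the same system by keeping $m_{r,2}=m_{z,2}$ but \emph{dropping} (\ref{eq:m1m2}). Hence $\mathcal{W}^{\textrm{Res}}$ is $\mathcal{W}^{\textrm{SOC}}$ with the extra equality constraints (\ref{eq:m1m2}), so any point feasible for the former is feasible for the latter. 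This direction is essentially immediate from the construction in Section~\ref{sec:restapp}, since adding constraints only shrinks the feasible set.

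For the second inclusion, I would again take $(u_z,u_r)\in\mathcal{W}^{\textrm{SOC}}$ with witnesses $m_{z,1},m_{z,2},m_{r,1},m_{r,2},\lambda_m$ satisfying $m_{r,2}=m_{z,2}$, so that (\ref{eq:mzr1}) and (\ref{eq:mzr2}) reduce to $u_z = m_{z,1}[i^{\circ}_{\mathcal{C}};v^{\circ}_{\mathcal{S}}]+\Sigma\lambda_m$ and $u_r = m_{r,1}[i^{\circ}_{\mathcal{C}};v^{\circ}_{\mathcal{S}}]+\Sigma\lambda_m$, together with $\|\lambda_m\|_2\leq\sqrt{m_{z,1}m_{r,1}}$. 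My goal is to exhibit a single $\lambda\in\Lambda$ (i.e.\ $\|\lambda\|_2\leq 1$) with $\Sigma\lambda_m=\Sigma\lambda$ so that $u_z\in\mathcal{U}_{m_z}^{\textrm{Rel}}$ and $u_r\in\mathcal{U}_{m_r}^{\textrm{Rel}}$ with $m_z=m_{z,1}$, $m_r=m_{r,1}$. The natural candidate is $\lambda=\lambda_m$ itself; I must then verify $\|\lambda_m\|_2\leq 1$. Since $m_{z,1}\in[\underbar{m}_z,1]\subseteq[0,1]$ and $m_{r,1}\in[0,1]$, we have $\sqrt{m_{z,1}m_{r,1}}\leq 1$, so (\ref{eq:soc}) yields $\|\lambda_m\|_2\leq\sqrt{m_{z,1}m_{r,1}}\leq 1$, placing $\lambda_m\in\Lambda$. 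Thus $u_z\in\mathcal{U}_{m_z}^{\textrm{Rel}}$ and $u_r\in\mathcal{U}_{m_r}^{\textrm{Rel}}$ with the common noise vector $\lambda_m$, giving $(u_z,u_r)\in\mathcal{W}^{\textrm{Rel},1}$.

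The main (mild) obstacle is bookkeeping the variable identifications rather than any real difficulty: I must be careful that $\mathcal{W}^{\textrm{Rel},1}$ allows the noise vectors in its two factors to differ, so a common $\lambda_m$ is certainly admissible, whereas the subtlety is only that the SOC bound $\sqrt{m_{z,1}m_{r,1}}$ must be checked to not exceed $1$. That check relies on the ranges in (\ref{eq:mzr1})'s accompanying constraints, $m_{z,1}\leq 1$ and $m_{r,1}\leq 1$, which is where the hypothesis $m_z\in[\underbar{m}_z,1]$, $m_r\in[0,1]$ is used. For the zonotopic case I would note that $\sqrt{m_{z,1}m_{r,1}}\leq 1$ gives $P\lambda_m\leq\sqrt{m_{z,1}m_{r,1}}\bm{1}\leq\bm{1}$, so $\lambda_m\in\Lambda$ there as well, completing both inclusions.
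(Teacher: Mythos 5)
Your proof is correct and takes essentially the same route as the paper's: the first inclusion holds because $\mathcal{W}^{\textrm{Res}}$ is exactly $\mathcal{W}^{\textrm{SOC}}$ with the extra equalities (\ref{eq:m1m2}), and the second because $m_{z,1},m_{r,1}\in[0,1]$ gives $\sqrt{m_{z,1}m_{r,1}}\leq 1$, so (\ref{eq:soc}) (or (\ref{eq:soc1})) implies $\lambda_m\in\Lambda$, i.e., $\lambda\in\Lambda$ is the looser constraint defining $\mathcal{W}^{\textrm{Rel},1}$. You merely make explicit the witness bookkeeping (reusing $\lambda_m$ as the common noise vector) that the paper leaves implicit.
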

\begin{proof}
The first inclusion follow from the fact that we obtain $\mathcal{W}^{\textrm{SOC}}$ by removing constraints from $\mathcal{W}^{\textrm{Res}}$. The second inclusion follows from the fact that we obtain $\mathcal{W}^{\textrm{Rel},1}$ by replacing (\ref{eq:soc}) (or  (\ref{eq:soc1})) in $\mathcal{W}^{\textrm{SOC}}$ with $\lambda\in\Lambda$, a looser constraint.
\end{proof}

\subsubsection{Intersections}\label{sec:interconvex}

We now formulate convex approximations of intersections of pre-test uncertainty sets, $\breve{\mathcal{P}}^{\eta_1,\eta_2}$, $(\eta_1,\eta_2)\in\mathbb{F}_2$, as described in Section~\ref{sec:intersection}. This is akin to joining the constraints of the two sets and adding the constraint (\ref{eq:iLiL}). Difficulty arises because the variables $\lambda^{\eta_1}$ and $\lambda^{\eta_2}$ in (\ref{eq:iLiL}) are not present in some of the approximations. We describe how each convex approximation affects intersections below. We remark that this section is not meant to be comprehensive, as there are a number of ways to mix and match convex approximations.

$\lambda^{\eta_1}$ and $\lambda^{\eta_2}$ are present in $\mathcal{W}^{\textrm{Rel},1}$, but not $\mathcal{W}^{\textrm{Rel},2}$. As a result, we can straightforwardly take intersections of relaxations based on $\mathcal{W}^{\textrm{Rel},1}$, and we can use $\mathcal{W}^{\textrm{Rel},2}$ to strengthen these relaxations. We show this explicitly for an example in Appendix~\ref{app:intersections}.

We obtain a convex restriction of $\breve{\mathcal{P}}^{\eta_1,\eta_2}$ by similarly using $\mathcal{W}^{\textrm{Res}}$ and, instead of (\ref{eq:iLiL}), the restriction
\begin{align*}
\Gamma^{\eta_1}u_z^{\eta_1} &=\Gamma^{\eta_2}u_z^{\eta_2}\\
\Gamma^{\eta_1}u_m^{\eta_1} &=\Gamma^{\eta_2}u_m^{\eta_2}.
\end{align*}

Finally, we obtain a convex approximation of $\breve{\mathcal{P}}^{\eta_1,\eta_2}$ using $\mathcal{W}^{\textrm{SOC}}$ and, instead of (\ref{eq:iLiL}), the approximation
\begin{align}
\Gamma^{\eta_1}\left(\begin{bmatrix}
i^{\circ}_{\mathcal{C}}\\v^{\circ}_{\mathcal{S}}
\end{bmatrix} + \Sigma\lambda^{\eta_1}_m\right)&=\Gamma^{\eta_2}\left(\begin{bmatrix}
i^{\circ}_{\mathcal{C}}\\v^{\circ}_{\mathcal{S}}
\end{bmatrix} + \Sigma\lambda^{\eta_2}_m\right).
\end{align}

\subsection{Intermediate infeeds}\label{sec:unc:inin}

We now describe how an intermediate infeed, as described in Section~\ref{sec:model:inin}, would affect the pre- and post-test uncertainty sets. Later in Footnote~\ref{fn:inin}, we describe how to account for intermediate infeeds in auxiliary signal design. Recall that to account for an intermediate infeed, we must create two models for each fault in $\mathbb{F}\setminus\textrm{N}$: one with $m_{\textrm{I}}\geq m_z$ (fault between relay and infeed), and one with $m_{\textrm{I}}< m_z$ (infeed between relay and fault).

For the eleven fault models with $m_{\textrm{I}}\geq m_z$, the intermediate infeed results in a bilinearity of the form $m_r\lambda$. Because this bilinearity is already present pre- and post-test, it does not significantly change the structure of the uncertainty sets---the convexification strategies from Section~\ref{sec:convex} apply as before to the pre-test uncertainty sets in Section~\ref{sec:exante}, and the post-test uncertainty sets in Section~\ref{sec:expost} remain linear or ellipsoidal (depending on $\Lambda$). This is because, in other words, we can simply add the current, $i_{\textrm{I}}$, to the remote infeed current, $i_{\textrm{R}}$, and proceed as before.

For the eleven fault models with $m_{\textrm{I}}< m_z$, the intermediate infeed results in bilinearities of the form  $m_z\lambda$ and  $m_r\lambda$. These are both already present in the pre-test uncertainty sets, and so the convexification strategies from Section~\ref{sec:convex} apply as before. However, instead of one bilinearity, the post-test uncertainty sets now have the same two bilinearities as the pre-test uncertainty sets. As a result, they no longer have linear or ellipsoidal exact reformulations. Instead, we could use the techniques from Section~\ref{sec:convex} to obtain convex approximations of the post-test uncertainty sets.

\section{Distance protection}\label{sec:protection}

The relay's task is to determine which element of $\mathbb{F}$ is true. Today, this is done by checking if $z_{\textrm{A}}^{\eta}$ is in the corresponding characteristic for each $\eta\in\mathbb{F}\setminus\textrm{N}$. The uncertainty sets in Section~\ref{sec:expost} serve as relay characteristics in this paper. For a given model $\eta\in\mathbb{F}\setminus\textrm{N}$, the pre-test uncertainty set is the union of all post-test uncertainty sets that can be realized upon observation of $i_{\textrm{L}}$. The purpose of an auxiliary signal is to ensure that only the correct element of $\mathbb{F}$ appears true to the relay. Here, we formulate fault detection in terms of post-test uncertainty sets and auxiliary signal design in terms of the pre-test uncertainty sets of Section~\ref{sec:exante}. We proceed by adapting our prior work in~\cite{taylor2024fault}. As we are now designing auxiliary signals, we hereon explicitly indicate dependence on $\Delta$.





\subsection{Fault detection}\label{sec:consep}

We assume that one of the models in $\mathbb{F}$ is the correct one. Post-test, once the relay has measured $i_{\textrm{L}}$ and $v_{\textrm{L}}$, we want to know which one it is.

\begin{definition}
$i_{\textrm{L}}$ and $v_{\textrm{L}}$ are consistent with model $\eta\in\mathbb{F}$ if $z_{\textrm{A}}^{\eta}\in\vec{\mathcal{Z}}_{\textrm{A}}^{\eta}(\Delta)$.
\end{definition}
If a model is consistent with the observations, then the relay believes that the corresponding scenario could be true. Due to uncertainty, $i_{\textrm{L}}$ and $v_{\textrm{L}}$ could be consistent with multiple models.

We use the auxiliary signal, $\Delta$, to ensure that they only consistent with the correct model. We will do so by requiring that no pair of pre-test uncertainty sets is simultaneously feasible. Recall the from Section~\ref{sec:intersection} that we denote the intersection of $\breve{\mathcal{V}}_{\textrm{A}}^{\eta_1}(\Delta)$ and $\breve{\mathcal{V}}_{\textrm{A}}^{\eta_2}(\Delta)$, $(\eta_1,\eta_2)\in\mathbb{F}_2$, by $\breve{\mathcal{P}}^{\eta_1,\eta_2}(\Delta)$.

\begin{definition}
The auxiliary signal, $\Delta$, separates the pair $(\eta_1,\eta_2)\in\mathbb{F}_2$ if $\breve{\mathcal{P}}^{\eta_1,\eta_2}(\Delta)=\emptyset$.
\end{definition}

If two models are separated, then no observation, $i_{\textrm{L}}$ and $v_{\textrm{L}}$, can be consistent with both of them. In other words, if $\eta_1$ and $\eta_2$ are separated, then for any possible $i_{\textrm{L}}$ and $v_{\textrm{L}}$, we cannot have $z_{\textrm{A}}^{\eta_1}\in\vec{\mathcal{Z}}_{\textrm{A}}^{\eta_1}(\Delta)$ and $z_{\textrm{A}}^{\eta_2}\in\vec{\mathcal{Z}}_{\textrm{A}}^{\eta_2}(\Delta)$.

Recall that in Section~\ref{sec:interconvex}, we discussed convex relaxations and restrictions of intersections.
\begin{lemma}\label{lemma:relaxedseparation}
Given the pair $(\eta_1,\eta_2)\in\mathbb{F}_2$,
\begin{itemize}
\item if $\mathring{\mathcal{P}}^{\eta_1,\eta_2}(\Delta)\subseteq\breve{\mathcal{P}}^{\eta_1,\eta_2}(\Delta)$ and $\mathring{\mathcal{P}}^{\eta_1,\eta_2}(\Delta)\neq\emptyset$, then $\Delta$ does not separate $(\eta_1,\eta_2)$; and
\item if $\breve{\mathcal{P}}^{\eta_1,\eta_2}(\Delta)\subseteq\tilde{\mathcal{P}}^{\eta_1,\eta_2}(\Delta)$ and $\tilde{\mathcal{P}}^{\eta_1,\eta_2}(\Delta)=\emptyset$, then $\Delta$ separates $(\eta_1,\eta_2)$.
\end{itemize}
\end{lemma}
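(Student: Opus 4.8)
The plan is to recognize that both statements are immediate monotonicity arguments once we unpack the definition of separation: $\Delta$ separates $(\eta_1,\eta_2)$ exactly when $\breve{\mathcal{P}}^{\eta_1,\eta_2}(\Delta)=\emptyset$. The only content is that emptiness behaves monotonically under set inclusion---a subset of the empty set is empty, and a set containing a nonempty subset is nonempty. I would first state explicitly that $\mathring{\mathcal{P}}^{\eta_1,\eta_2}(\Delta)$ and $\tilde{\mathcal{P}}^{\eta_1,\eta_2}(\Delta)$ are the convex restriction and relaxation of the intersection constructed in Section~\ref{sec:interconvex}, so that the inclusion hypotheses $\mathring{\mathcal{P}}^{\eta_1,\eta_2}(\Delta)\subseteq\breve{\mathcal{P}}^{\eta_1,\eta_2}(\Delta)$ and $\breve{\mathcal{P}}^{\eta_1,\eta_2}(\Delta)\subseteq\tilde{\mathcal{P}}^{\eta_1,\eta_2}(\Delta)$ are exactly the ones already available. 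I would then handle the two bullets as separate short deductions.

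For the restriction bullet, I would assume $\mathring{\mathcal{P}}^{\eta_1,\eta_2}(\Delta)\neq\emptyset$ and pick any point $v_{\textrm{L}}$ in it. Since $\mathring{\mathcal{P}}^{\eta_1,\eta_2}(\Delta)\subseteq\breve{\mathcal{P}}^{\eta_1,\eta_2}(\Delta)$, that same $v_{\textrm{L}}$ lies in $\breve{\mathcal{P}}^{\eta_1,\eta_2}(\Delta)$, so the true intersection is nonempty, which by definition means $\Delta$ does not separate $(\eta_1,\eta_2)$. The interpretation I would record is that any feasible point of the inner (restricted) problem is a genuine observation $(v_{\textrm{L}},i_{\textrm{L}})$ consistent with both models, hence a certificate of non-separation.

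For the relaxation bullet, I would use $\breve{\mathcal{P}}^{\eta_1,\eta_2}(\Delta)\subseteq\tilde{\mathcal{P}}^{\eta_1,\eta_2}(\Delta)$: if $\tilde{\mathcal{P}}^{\eta_1,\eta_2}(\Delta)=\emptyset$, then $\breve{\mathcal{P}}^{\eta_1,\eta_2}(\Delta)$ is contained in the empty set and is therefore itself empty, which is precisely the statement that $\Delta$ separates $(\eta_1,\eta_2)$. The dual interpretation is that if even the looser (relaxed) outer problem is infeasible, then so is the exact one.

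There is essentially no hard step here; the only subtlety is keeping the inclusion directions straight---the restriction sits inside $\breve{\mathcal{P}}^{\eta_1,\eta_2}(\Delta)$ while the relaxation contains it---and correctly linking emptiness to the definition of separation. I would therefore keep the proof to a few lines, taking care only to cite Section~\ref{sec:interconvex} for the inclusions so that the lemma reads as a clean consequence of the approximations constructed there rather than as a new claim requiring its own machinery.
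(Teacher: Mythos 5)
Your proof is correct and is exactly the argument the paper treats as self-evident (the lemma is stated without proof there): separation is by definition the emptiness of $\breve{\mathcal{P}}^{\eta_1,\eta_2}(\Delta)$, and emptiness is monotone under inclusion. Your two bullet deductions and the care with inclusion directions match the intended reasoning precisely, so nothing is missing.
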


Lemma~\ref{lemma:relaxedseparation} implies that (i) if a restriction of $\breve{\mathcal{P}}^{\eta_1,\eta_2}(\Delta)$ is feasible, then the auxiliary signal does not achieve separation, and (ii) if a relaxation of $\breve{\mathcal{P}}^{\eta_1,\eta_2}(\Delta)$ is infeasible, then the auxiliary signal does achieve separation. We can directly test both conditions for a given auxiliary signal. In the next section, we use the second condition to optimize over separating auxiliary signals.

\subsection{Auxiliary signal design}\label{sec:auxiliary}

Let $Q$ be a positive definite matrix, and consider the below optimization.
\begin{subequations}
\label{opt:AS1}
\begin{align}
\min_{\Delta} \quad&\Delta^{\dag}Q\Delta\\
\textrm{such that}\quad & \breve{\mathcal{P}}^{\eta_1,\eta_2}(\Delta)=\emptyset,\quad (\eta_1,\eta_2)\in\mathbb{F}_2.\label{opt:intercon}
\end{align}
\end{subequations}
The objective is a proxy for the disruption caused by the auxiliary signal; if $Q=\bm{I}$, it is $\left\|\Delta\right\|_2^2$. The optimal solution is the minimally disruptive auxiliary signal that separates all models, so that the relay can identify which element of $\mathbb{F}$ is true.\footnote{If there were an intermediate infeed, as described in Sections~\ref{sec:model:inin} and~\ref{sec:unc:inin}, we would not enforce separation between two models corresponding to the same type of fault. This is because both models correspond to the same conclusion for the relay, and hence need not be separated.\label{fn:inin}}
The following is adaptated from Lemma 1 in~\cite{taylor2024fault}.
\begin{lemma}\label{lem:Farkas}
There exists a convex set, $\mathcal{S}^{\eta_1,\eta_2}(\Delta)$, the non-emptiness of which implies that $\Delta$ separates the pair $(\eta_1,\eta_2)\in\mathbb{F}_2$.
\end{lemma}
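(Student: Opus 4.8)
The plan is to build $\mathcal{S}^{\eta_1,\eta_2}(\Delta)$ as the feasibility set of a Farkas-type dual to a \emph{convex relaxation} of the intersection, not to $\breve{\mathcal{P}}^{\eta_1,\eta_2}(\Delta)$ itself. The intersection is nonconvex because of the bilinearities in $\mathcal{W}$, so first I would replace $\mathcal{W}$ with one of the convex relaxations of Section~\ref{sec:convex:relax}. I would use $\mathcal{W}^{\textrm{Rel},1}$ (optionally intersected with $\mathcal{W}^{\textrm{Rel},2}$ to tighten, i.e.\ $\mathcal{W}^{\textrm{Rel},3}$), because it retains the noise variables $\lambda^{\eta_1}$ and $\lambda^{\eta_2}$ appearing in the linking constraint~(\ref{eq:iLiL}), as noted in Section~\ref{sec:interconvex}. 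Call the resulting convex superset $\tilde{\mathcal{P}}^{\eta_1,\eta_2}(\Delta)$. By the second bullet of Lemma~\ref{lemma:relaxedseparation}, if $\tilde{\mathcal{P}}^{\eta_1,\eta_2}(\Delta)=\emptyset$ then $\Delta$ separates the pair, so it suffices to exhibit a convex set whose non-emptiness \emph{certifies} $\tilde{\mathcal{P}}^{\eta_1,\eta_2}(\Delta)=\emptyset$.

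Second, I would assemble $\tilde{\mathcal{P}}^{\eta_1,\eta_2}(\Delta)$ as an explicit feasibility system in the stacked variables $x=(v_{\textrm{L}},\,m_z^{\eta_1},m_r^{\eta_1},\lambda^{\eta_1},\,m_z^{\eta_2},m_r^{\eta_2},\lambda^{\eta_2})$. Collecting the membership conditions $\psi^{\eta_j}v_{\textrm{L}}\in\breve{\mathcal{V}}_{\textrm{A}}^{\eta_j}(\Delta)$, the box constraints on the $m$'s, the zonotope constraints on the $\lambda$'s, and the linking equality~(\ref{eq:iLiL}), this becomes a polyhedron $\{x : Ax\le b(\Delta)\}$ under zonotopic uncertainty; the explicit lists are in Appendix~\ref{app:sets}. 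The crucial structural point is that $\Delta$ enters only through the IBR source currents $i_{\mathcal C}(\Delta)$, which are affine in $\Delta$, so the data appears in $b(\Delta)$ affinely while $A$ is constant.

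Third, I would apply Farkas' lemma~\cite{Schrijver1998LPIP}: $\{x:Ax\le b(\Delta)\}=\emptyset$ if and only if there is $y\ge 0$ with $A^{\top}y=0$ and $b(\Delta)^{\top}y<0$. I would then set
\[
\mathcal{S}^{\eta_1,\eta_2}(\Delta)=\left\{y\ge 0 \;\left|\; A^{\top}y=0,\; b(\Delta)^{\top}y<0\right.\right\}.
\]
For fixed $\Delta$ this set is convex, being the intersection of the nonnegative orthant, a linear subspace, and an open half-space. Its non-emptiness is exactly the emptiness of the relaxed intersection, which by the chain $\emptyset=\tilde{\mathcal{P}}^{\eta_1,\eta_2}(\Delta)\supseteq\breve{\mathcal{P}}^{\eta_1,\eta_2}(\Delta)$ and Lemma~\ref{lemma:relaxedseparation} yields separation. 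This mirrors Lemma~1 of~\cite{taylor2024fault}, with the present relaxed intersection supplying $A$ and $b(\Delta)$.

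The main obstacle is the norm-bounded (ellipsoidal) case, where the relaxation carries the second-order cone constraints~(\ref{eq:uz})--(\ref{eq:ur}) and is no longer polyhedral. There I would replace plain Farkas with a conic theorem of the alternative: writing the system as $Ax-b(\Delta)\in -K$ for $K$ a product of the nonnegative orthant and second-order cones, non-emptiness of the dual certificate $\{y\in K^{*} : A^{\top}y=0,\ b(\Delta)^{\top}y<0\}$ still \emph{implies} infeasibility by weak conic duality. Since the lemma asserts only the sufficiency direction, weak duality suffices and \emph{no} Slater/constraint-qualification is needed; a CQ would be required only for the converse. The remaining work is bookkeeping: correctly stacking the per-scenario constraints and the linking equality into $(A,b(\Delta))$, and confirming that $\Delta$ enters bilinearly with $y$ through $b(\Delta)^{\top}y$, which is precisely what makes the design problem~(\ref{opt:AS1}) a bilinear program.
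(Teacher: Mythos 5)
Your proposal is correct and takes essentially the same route as the paper's proof: relax the bilinear intersection $\breve{\mathcal{P}}^{\eta_1,\eta_2}(\Delta)$ to a convex set, apply Farkas' lemma (or its conic extension) to obtain a dual certificate set $\mathcal{S}^{\eta_1,\eta_2}(\Delta)$, and invoke Lemma~\ref{lemma:relaxedseparation} to conclude separation. Your handling of the norm-bounded case is in fact slightly sharper than the paper's: the paper's footnote assumes a constraint qualification, whereas you correctly note that weak conic duality alone gives the direction the lemma asserts (non-emptiness of the dual certificate implies infeasibility of the relaxation), so no constraint qualification is needed for sufficiency---only for the converse.
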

\begin{proof}
Let $\tilde{\mathcal{P}}^{\eta_1,\eta_2}(\Delta)$ be a convex relaxation of $\breve{\mathcal{P}}^{\eta_1,\eta_2}(\Delta)$. Farkas' lemma~\cite{Schrijver1998LPIP} states that there is another convex conic set, $\mathcal{S}^{\eta_1,\eta_2}(\Delta)$, which is nonempty if and only if $\tilde{\mathcal{P}}^{\eta_1,\eta_2}(\Delta)=\emptyset$. Therefore, by Lemma~\ref{lemma:relaxedseparation}, if $\mathcal{S}^{\eta_1,\eta_2}(\Delta)\neq\emptyset$, then $\Delta$ separates models $\eta_1$ and $\eta_2$.\footnote{Farkas' lemma technically only applies when the pre-test uncertainty sets are polyhedral. There are, however, extensions to convex and semidefinite systems~\cite{ben1969linear,klep2013exact}, which allow us to proceed in the same fashion. In this case, we are in effect assuming that some constraint qualification holds.}
\end{proof}

We refer to each $\mathcal{S}^{\eta_1,\eta_2}(\Delta)$ as a dual system. In Appendix~\ref{app:dualsystems}, we explain how to construct the dual systems and list the corresponding constraints for a particular polyhedral relaxation of $\breve{\mathcal{P}}^{\eta_1,\eta_2}(\Delta)$.

Constraint (\ref{opt:intercon}) is not well-suited for optimization. Lemma~\ref{lem:Farkas} provides a conventional constraint that is a sufficient condition for (\ref{opt:intercon}). Replacing (\ref{opt:intercon}), we obtain the following optimization.
\begin{subequations}
\label{opt:AS2}
\begin{align}
\min_{\Delta} \quad&\Delta^{\dag}Q\Delta\\
\textrm{such that}\quad & \mathcal{S}^{\eta_1,\eta_2}(\Delta)\neq\emptyset,\quad (\eta_1,\eta_2)\in\mathbb{F}_2.\label{opt:Farkascon}
\end{align}
\end{subequations}
Constraint (\ref{opt:Farkascon}) is a convex restriction of (\ref{opt:intercon}). As a result, the optimal objective of (\ref{opt:AS2}) will be greater than or equal to that of (\ref{opt:AS1}), and the resulting auxiliary signal will separate all $(\eta_1,\eta_2)\in\mathbb{F}_2$. 

\subsection{Analysis and computation}

Lemma~\ref{lemma:relaxedseparation} provides sufficient conditions for determining if an auxiliary signal does or does not separate a pair of models. These conditions amount testing the feasibility of linear or convex sets. Lemma~\ref{lem:Farkas} provides another sufficient condition for separation, which can be tested similarly.

The optimization (\ref{opt:AS2}) is biconvex. However, it is an offline computation and not necessarily large, e.g., if the network is small or has been reduced beforehand. We could thus solve (\ref{opt:AS2}) with a number of different nonlinear programming algorithms, e.g., the convex-concave procedure~\cite{taylor2024fault}. Here we use the routine in Appendix~\ref{app:admm}, which is an instance of the Alternating Direction Method of Multipliers~\cite{boyd2011distributed}.

We gain some qualitative insight by examining the dual systems in Appendix~\ref{app:dualsystems}. The following all shrink the size of each fault's uncertainty set:
\begin{itemize}
\item increasing $\underbar{m}_z$, the threshold for close-in faults;
\item decreasing the source uncertainty (by scaling $\Sigma$ down or $P$ up); and
\item setting $r_{\textrm{F}}=0$, i.e., only considering bolted faults. 
\end{itemize}
The first two bring the dual systems closer to feasibility, as can be seen by examining the terms $\underbar{m}_z\mu_{\textrm{zl}}^{\eta}$ and $\bm{1}^{\top}\phi^{\eta}$ in the leading inequality constraints. The third allows us to drop the dual variable $\mu_{\textrm{ru}}$ and several equality constraints, similarly bringing the dual systems closer to feasibility.

Now consider the leading inequality constraint in each dual system. The only terms that can be positive are the first, which captures the network, and $\underbar{m}_z\mu_{\textrm{zl}}^{\eta}$, the close-in term; without at least one of these, the dual systems would always be infeasible and separation impossible. By making these network terms positive, the auxiliary signal, $\Delta$, brings the dual systems to feasibility, achieving separation.

\subsection{Example}\label{sec:example:AS}

We first plot auxiliary signals in the complex plane in Section~\ref{sec:example:ASGrid}, and then optimize auxiliary signals in Section~\ref{sec:example:ASOpt}. We use the 14-bus system described in Section~\ref{sec:example:postest}. To make the problem more difficult, thus making auxiliary signals necessary, we increase the noise magnitude to $\Sigma = \bm{I}$ and make all sources IBRs, so that $\mathcal{C}=\{1,2,3,4,5,8,14\}$. In all cases, the auxiliary signals are negative-sequence current injections.

We use the relaxation $\mathcal{W}^{\textrm{Rel},3}$ from Section~\ref{sec:convex:relax} for the pre-test uncertainty sets. The constraints that make up the corresponding dual systems are listed in Section~\ref{app:dualsystems}.

All optimizations were carried out in Python using CVXPy~\cite{diamond2016cvxpy} and the solver Clarabel~\cite{goulart2024clarabel}. CVXPy was particularly suitable because it supports complex numbers.

\subsubsection{Sets of separating auxiliary signals}\label{sec:example:ASGrid}

We first have all IBRs inject the same negative sequence phasor, $\delta\in\mathbb{C}$, so that $\Delta=\delta\bm{1}\in\mathbb{C}^{|\mathcal{C}|}$. This enables us to plot sets of separating auxiliary signals. We do so by discretizing the complex plane and solving a feasibility problem at each gridpoint.

The left panel of Fig.~\ref{fig:ASGrid} shows the separating auxiliary signals for N and ag and ab faults. The smallest auxiliary signal (on the grid) that separate all pairs---$(\textrm{N},\textrm{ag})$, $(\textrm{N},\textrm{ab})$, and $(\textrm{ag},\textrm{ab})$---is $\delta= j 0.2$, for which $\left\|\Delta\right\|_2\approx0.53$. Within the range of the figure, all auxiliary signals that separate $(\textrm{N},\textrm{ab})$ also separate $(\textrm{N},\textrm{ag})$ and $(\textrm{ag},\textrm{ab})$.

The right panel of Fig.~\ref{fig:ASGrid} shows the separating auxiliary signals for N and all LG and LL faults. The smallest auxiliary signal on the grid that separates N and all LG faults has magnitude $\left\|\Delta\right\|_2\approx3.02$, and similarly for N and all LL faults. The smallest auxiliary signal on the grid that separates N and all LG and LL faults is $\delta=1.6 + j0.6$, for which $\left\|\Delta\right\|_2\approx4.52$.

\begin{figure}[h]
		\centering
\includegraphics[width=\columnwidth]{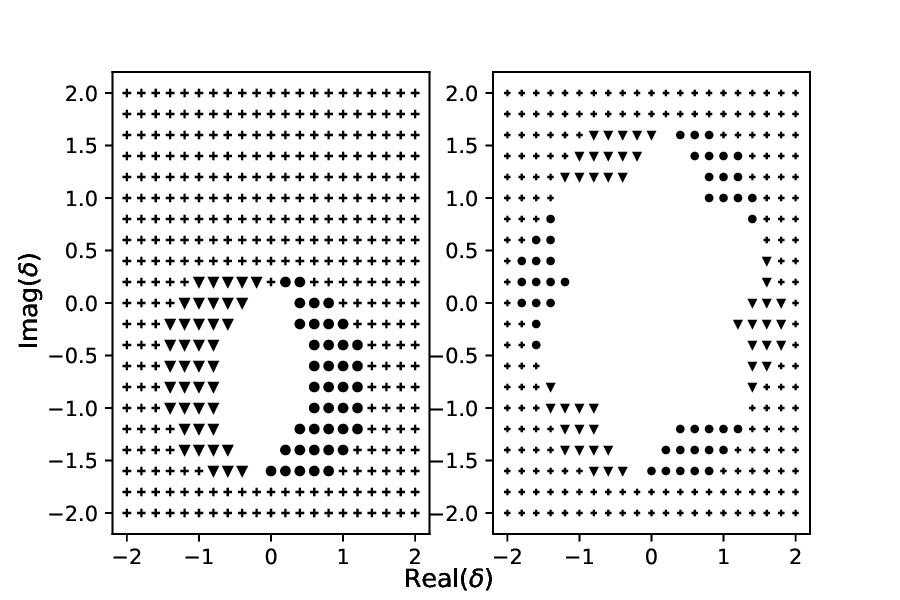}
	\caption{Left: separating auxiliary signals for the pair $(\textrm{N},\textrm{ag})$ ($\bullet $), the pair $(\textrm{N},\textrm{ab})$ ($\blacktriangledown $), and pairs $(\textrm{N},\textrm{ag})$, $(\textrm{N},\textrm{ab})$, and $(\textrm{ag},\textrm{ab})$ (+). Right: Separating auxiliary signals for all LG faults ($\bullet $), all LL faults ($\blacktriangledown$), and all LG and LL faults (+).}
	\label{fig:ASGrid}
\end{figure}


\subsubsection{Optimizing auxiliary signals}\label{sec:example:ASOpt}

We now use the ADMM routine in Appendix~\ref{app:admm} to optimize an auxiliary signal that separates N and all LL and LG faults. We allow each IBR to inject a different negative-sequence current, in hopes that the additional degrees of freedom will allow for a smaller auxiliary signal. We set $Q=\bm{I}$ and $\rho=1$. We tried two starting points: $\Delta_0=\bm{0}$ and $\Delta_0=(1.6 + j0.6)\bm{1}$, the latter of which corresponds to the best grid point in the right panel of Fig.~\ref{fig:ASGrid}. In both cases, the routine found the separating auxiliary signal
\begin{align*}
\Delta\approx\;&[1.55+j0.57 \quad 1.56+j0.56 \quad 1.59+j0.50  \\
& 1.50+j0.64 \quad 1.50+j0.63 \quad 1.52+j0.60\\
& 1.48+j0.66]^{\dag}.
\end{align*}
The magnitude is $\left\|\Delta\right\|_2\approx 4.35$, a 4\% reduction from the best grid point in the right panel of Fig.~\ref{fig:ASGrid}. This improvement could be due to allowing each IBR to set a different negative sequence phasor, or not restricting the auxiliary signal to a gridpoint. In either case, this indicates that additional degrees of freedom might not be helpful, as a simpler auxiliary signal achieves similar simulated performance.

\section{Conclusions and future work}

We have used the Minkowski sum to systematically aggregate the uncertainty faced by distance relays. This allows us to construct characteristics that explicitly account for whatever SG and IBR sources are in the surrounding power grid. IBRs can inject auxiliary signals to make a relay's detection problem unambiguous. We have used Farkas' lemma to construct an optimization for IBR auxiliary signals.

A clear next step in this research is evaluation of the new relay characteristics and optimized auxiliary signals in electromagnetic transient simulation. There are several potential methodological improvements. The network matrices depend on nominal values of the fault's location and resistance. It may be possible to improve on this approximation, e.g., by combining characteristics based on multiple nominal values. The auxiliary signal optimization can be straightforwardly extended to multiple relays on different lines by adding dual system constraints for each relay. It may be of interest to distribute this problem, e.g., if the IBRs each set their own auxiliary signals. 

We expect that our core ideas---aggregating uncertainty with the Minkowski sum and using Farkas' lemma to optimize parameters pre-test---are applicable to other schemes like overcurrent and time-domain protection.

\bibliographystyle{IEEEtran}
\bibliography{MainBib,JATBib}

\appendices
\section{Network matrices}\label{app:netmat}

We compute the matrices $\Gamma^{\eta}$, $\Theta^{\eta}$, $\Phi^{\eta}$, and $\Psi^{\eta}$ in (\ref{eq:vrir}) for each $\eta\in\mathbb{F}$. We divide the buses into IBRs, $\mathcal{C}$, SGs, $\mathcal{S}$, junctions, $\mathcal{J}$, and the virtual bus where the fault occurs, $\textrm{F}$. Junctions include buses with zero current injection and with impedance loads connected to them. All satisfy $i_{\mathcal{J}}=-Y_{\mathcal{J}}v_{\mathcal{J}}$, where the admittance is zero for physical junctions. Recall from Approximation~\ref{approx:hatm} that $\hat{m}_z$ and $\hat{m}_r$ are the fault's nominal normalized location and resistance.

The relation between $v_{\textrm{F}}$ and $i_{\textrm{F}}$ depends on the fault.
\begin{itemize}
\item A phase a to ground fault has $v_{\textrm{F}}^{\textrm{a}}=\hat{m}_rr_{\textrm{F}}i_{\textrm{F}}^{\textrm{a}}$, and $i_{\textrm{F}}^{\textrm{b}}=i_{\textrm{F}}^{\textrm{c}}=0$.
\item An ab fault has $v_{\textrm{F}}^{\textrm{a}}-v_{\textrm{F}}^{\textrm{b}}=\hat{m}_rr_{\textrm{F}}\left(i_{\textrm{F}}^{\textrm{a}}-i_{\textrm{F}}^{\textrm{b}}\right)$ and $i_{\textrm{F}}^{\textrm{c}}=0$.
\item When there is no fault, $Y_{\textrm{F}}^{\textrm{N}}=\bm{0}$.
\end{itemize}
In general, we can write $i_{\textrm{F}}=-Y_{\textrm{F}}^{\eta}v_{\textrm{F}}$.

Let $Y\in\mathbb{R}^{3(n+1)\times 3(n+1)}$ be the bus admittance matrix, which accounts for the virtual fault bus. Substituting for $i_{\textrm{F}}$ and $i_{\mathcal{J}}$, we have
\[
Y \begin{bmatrix}
v_{\textrm{F}}\\
v_{\mathcal{J}}\\
v_{\mathcal{C}}\\
v^{\circ}_{\mathcal{S}}
\end{bmatrix}=
\begin{bmatrix}
-Y_{\textrm{F}}^{\eta}v_{\textrm{F}}\\
-Y_{\mathcal{J}}v_{\mathcal{J}}\\
i^{\circ}_{\mathcal{C}}\\
i_{\mathcal{S}}
\end{bmatrix}.
\]
Let $Y_{\mathcal{S}}$ be the columns of $Y$ that multiply $v^{\circ}_{\mathcal{S}}$. Let $I_{\mathcal{S}}$ and $I_{\mathcal{C}}$ be thin matrices with an identity in the entries corresponding to the buses in $\mathcal{S}$ and $\mathcal{C}$, respectively, and zeros elsewhere. Then we have
\begin{align*}
&\underset{=Y_{\textrm{LHS}}^{\eta}}{\underbrace{\left(Y
+\begin{bmatrix}
Y_{\textrm{F}}^{\eta} &\bm{0} &\bm{0} & \multirow{4}{*}{$-I_{\mathcal{S}}-Y_{\mathcal{S}}$} \\
\bm{0} & Y_{\mathcal{J}} & \bm{0} & \\
\bm{0} & \bm{0}& \bm{0} & \\
\bm{0} & \bm{0} & \bm{0} & 
\end{bmatrix}\right)}}
\begin{bmatrix}
v_{\textrm{F}}\\
v_{\mathcal{J}}\\
v_{\mathcal{C}}\\
i_{\mathcal{S}}
\end{bmatrix}=\\
&\quad
\underset{=Y_{\textrm{RHS}}}{\underbrace{\begin{bmatrix}
I_{\mathcal{C}} & -Y_{\mathcal{S}}
\end{bmatrix}}}
\begin{bmatrix}
i^{\circ}_{\mathcal{C}}\\
v^{\circ}_{\mathcal{S}}
\end{bmatrix}
\end{align*}
Let $\tilde{Y}^{\eta}=\left(Y_{\textrm{LHS}}^{\eta}\right)^{-1}Y_{\textrm{RHS}}$. We obtain the voltage at bus $k\notin\mathcal{S}$ by premultiplying $\tilde{Y}^{\eta}\begin{bmatrix}
i^{\circ}_{\mathcal{C}}\\
v^{\circ}_{\mathcal{S}}
\end{bmatrix}$ as follows. Let $D_k$ and $E_k$ be such that
\[
v_k=D_k\begin{bmatrix}
v_{\textrm{F}}\\
v_{\mathcal{J}}\\
v_{\mathcal{C}}\\
i_{\mathcal{S}}
\end{bmatrix}\quad\textrm{and}\quad v_k=E_k\begin{bmatrix}
i^{\circ}_{\mathcal{C}}\\
v^{\circ}_{\mathcal{S}}
\end{bmatrix}
\]
for $k\notin\mathcal{S}$ and $k\in\mathcal{S}$, respectively. Let
\[
\aleph_{k}^{\eta} = \left\{\begin{array}{ll}
D_k\tilde{Y}^{\eta} & \textrm{if }k\notin\mathcal{S}\\
E_k & \textrm{if }k \in\mathcal{S}
\end{array}
\right..
\]
Then $v_k = \aleph_k^{\eta}\begin{bmatrix}
i^{\circ}_{\mathcal{C}}\\
v^{\circ}_{\mathcal{S}}
\end{bmatrix}$, and $\Phi^{\eta}=\aleph^{\eta}_{\textrm{L}}$ and $\Psi^{\eta}=\aleph^{\eta}_{\textrm{R}}$. Let
\begin{align*}
\Gamma^{\eta}&=\frac{1}{\hat{m}_zz}\left(\Phi-\aleph_{\textrm{F}}^{\eta} \right)\\
\Theta^{\eta}&=\frac{1}{\left(1-\hat{m}_z\right)z}\left(\Psi-\aleph_{\textrm{F}}^{\eta} \right).
\end{align*}
The terminal currents in scenario $\eta$ are then
\begin{align*}
i_{\textrm{L}} &= \frac{1}{\hat{m}_zz}\left(v_{\textrm{L}}-v_{\textrm{F}} \right)\\
&=\Gamma^{\eta}\begin{bmatrix}
i^{\circ}_{\mathcal{C}}\\
v^{\circ}_{\mathcal{S}}
\end{bmatrix}\\
i_{\textrm{R}} &= \frac{1}{(1-\hat{m}_z)z}\left(v_{\textrm{R}}-v_{\textrm{F}} \right)\\
&=\Theta^{\eta}\begin{bmatrix}
i^{\circ}_{\mathcal{C}}\\
v^{\circ}_{\mathcal{S}}
\end{bmatrix}.
\end{align*}

\section{Uncertainty sets}\label{app:sets}

\subsection{Pre-test}

The set $\mathcal{W}$ for fault $\eta\in\mathbb{F}\setminus\textrm{N}$ is given by
\begin{align*}
&m_z^{\eta}\in[0,1],\;m_r^{\eta}\in[0,1]\\
&u_z^{\eta} = m_z^{\eta}\begin{bmatrix}
i^{\circ}_{\mathcal{C}}\\v^{\circ}_{\mathcal{S}}
\end{bmatrix} + m_z^{\eta}\Sigma\lambda^{\eta}\\
&u_r^{\eta} = m_r^{\eta}\begin{bmatrix}
i^{\circ}_{\mathcal{C}}\\v^{\circ}_{\mathcal{S}}
\end{bmatrix} + m_r^{\eta}\Sigma\lambda^{\eta}\\
&\left\|\lambda^{\eta}\right\|_2^2\leq1 \quad\textrm{or}\quad P\lambda^{\eta}\leq\bm{1},
\end{align*}
depending on if the uncertainty is ellipsoidal or polyhedral.

For fault $\eta\in\mathbb{F}\setminus\textrm{N}$, the set $\breve{\mathcal{V}}_{\textrm{A}}^{\eta}$ consists of $\mathcal{W}$ and the constraint
\[
\psi^{\eta}v_{\textrm{L}} = \Omega_z^{\eta}u_z^{\eta} + \Omega_r^{\eta}u_r^{\eta},
\]
which we derived earlier (as (\ref{eq:vlomega}) in Section~\ref{sec:exante}), along with the coefficients $\Omega_z^{\eta}$, $\Omega_r^{\eta}$, and $\Omega^{\textrm{N}}$. $\breve{\mathcal{V}}_{\textrm{A}}^{\textrm{N}}$ consists of the constraints
\begin{align}
\breve{\mathcal{V}}_{\textrm{A}}^{N}:\quad&
\left\{\begin{array}{l}
v_{\textrm{L}} = \Omega^{\textrm{N}}u \\
u  = \begin{bmatrix}
i^{\circ}_{\mathcal{C}}\\v^{\circ}_{\mathcal{S}}
\end{bmatrix} + \Sigma\lambda^{\textrm{N}} \\
\left\|\lambda^{\textrm{N}} \right\|_2^2\leq1 \quad\textrm{or}\quad P\lambda^{\textrm{N}} \leq\bm{1}.
\end{array}\right.
\end{align}

\subsubsection{Intersections}\label{app:intersections}

We list the constraints that make up a convex relaxation of $\breve{\mathcal{P}}^{\eta_1,\eta_2}$, the intersection of $\breve{\mathcal{V}}_{\textrm{A}}^{\eta_1}$ and $\breve{\mathcal{V}}_{\textrm{A}}^{\eta_2}$, for $(\eta_1,\eta_2)\in\mathbb{F}_2$. We do so for polyhedral uncertainty.

We first consider the case where neither $\eta_1$ or $\eta_2$ are N. Let $\tilde{\mathcal{V}}_{\textrm{A}}^{\eta}$ be the relaxed uncertainty set obtained by replacing $\mathcal{W}$ with $\mathcal{W}^{\textrm{Rel},1}$ in $\breve{\mathcal{V}}_{\textrm{A}}^{\eta}$, $\eta\in\mathbb{F}\setminus \textrm{N}$. The intersection of $\tilde{\mathcal{V}}_{\textrm{A}}^{\eta_1}$ and $\tilde{\mathcal{V}}_{\textrm{A}}^{\eta_2}$, is given by (\ref{eq:interagbcrel1}).

\begin{subequations}
\label{eq:interagbcrel1}
\begin{align}
i_{\textrm{L}}^{\eta_1}=i_{\textrm{L}}^{\eta_2}:\quad&\left\{
\Gamma^{\eta_1}\left(\begin{bmatrix}
i^{\circ}_{\mathcal{C}}\\v^{\circ}_{\mathcal{S}}
\end{bmatrix} + \Sigma\lambda^{\eta_1}\right)=\Gamma^{\eta_2}\left(\begin{bmatrix}
i^{\circ}_{\mathcal{C}}\\v^{\circ}_{\mathcal{S}}
\end{bmatrix} + \Sigma\lambda^{\eta_2}\right)
\right.\label{eq:iLagiLbc}\\
\tilde{\mathcal{V}}_{\textrm{A}}^{\eta}:\quad&
\left\{\begin{array}{l}
\psi^{\eta}v_{\textrm{L}}=\Omega_z^{\eta}u_z^{\eta} + \Omega_r^{\eta}u_r^{\eta}\\
m_z^{\eta}\in[\underbar{m}_z,1],\;m_r^{\eta}\in[0,1]\\
u_z^{\eta} = m_z^{\eta}\begin{bmatrix}
i^{\circ}_{\mathcal{C}}\\v^{\circ}_{\mathcal{S}}
\end{bmatrix} + \Sigma\lambda^{\eta}\\
u_r^{\eta} = m_r^{\eta}\begin{bmatrix}
i^{\circ}_{\mathcal{C}}\\v^{\circ}_{\mathcal{S}}
\end{bmatrix} + \Sigma\lambda^{\eta}\\
P\lambda^{\eta} \leq \bm{1}
\end{array}\right.\label{eq:tildeVag}\\
&\quad\quad\quad\quad \eta\in\{\eta_1,\eta_2\}.\nonumber
\end{align}
\end{subequations}
We can tighten the relaxation by augmenting (\ref{eq:interagbcrel1}) with the constraints that make up $\mathcal{W}^{\textrm{Rel},2}$ for each $\eta\in\{\eta_1,\eta_2\}$:
\begin{align}
\begin{array}{l}
u_z^{\eta} = m_z^{\eta}\begin{bmatrix}
i^{\circ}_{\mathcal{C}}\\v^{\circ}_{\mathcal{S}}
\end{bmatrix} + \Sigma\lambda^{\eta}_{m_z}\\
u_r^{\eta} = m_r^{\eta}\begin{bmatrix}
i^{\circ}_{\mathcal{C}}\\v^{\circ}_{\mathcal{S}}
\end{bmatrix} + \Sigma\lambda^{\eta}_{m_r}\\
P\lambda^{\eta}_{m_z} \leq m_z^{\eta}\bm{1}\\
P\lambda^{\eta}_{m_r} \leq m_r^{\eta}\bm{1}.
\end{array}\label{eq:interagbcrel2}
\end{align}
(\ref{eq:interagbcrel1}) is a convex relaxation of $\breve{\mathcal{P}}^{\eta_1,\eta_2}$ based on $\mathcal{W}^{\textrm{Rel},1}$. (\ref{eq:interagbcrel1})-(\ref{eq:interagbcrel2}) is a tighter convex relaxation based on $\mathcal{W}^{\textrm{Rel},3}$.

Observe that $v_{\textrm{L}}$ is the same in (\ref{eq:tildeVag}) for both $\eta\in\{\eta_1,\eta_2\}$, and that we constrain $i_{\textrm{L}}$ to be the same in both with (\ref{eq:iLagiLbc}). We, however, do not constrain $i_{\textrm{L}}$ to be the same in both instances of (\ref{eq:interagbcrel2}). To do so, we would need to add the nonlinear constraints, which would compromise convexity.

We obtain a relaxation of $\breve{\mathcal{P}}^{\textrm{N},\eta}$, $\eta\in\mathbb{F}\setminus \textrm{N}$, by similarly joining the constraints in $\breve{\mathcal{V}}_{\textrm{A}}^{\textrm{N}}$, $\tilde{\mathcal{V}}_{\textrm{A}}^{\eta}$, and 
\[
\Gamma^{\textrm{N}}\left(\begin{bmatrix}
i^{\circ}_{\mathcal{C}}\\v^{\circ}_{\mathcal{S}}
\end{bmatrix} + \Sigma\lambda^{\textrm{N}}\right)=\Gamma^{\eta}\left(\begin{bmatrix}
i^{\circ}_{\mathcal{C}}\\v^{\circ}_{\mathcal{S}}
\end{bmatrix} + \Sigma\lambda^{\eta}\right).
\]
As before, we can tighten the relaxation by adding (\ref{eq:interagbcrel2}) for $\eta$ (but not N). 

\subsection{Post-test}

The interval uncertainty is
\begin{align}
&m_z\in[\underbar{m}_z,1],\;m_r\in[0,1].\label{eq:mzmrptapp}
\end{align}
The constraints that make up $\mathcal{U}_{m_r}$ are
\begin{subequations}
\label{eq:mzmrur}
\begin{align*}
&u_r = m_r\begin{bmatrix}
i^{\circ}_{\mathcal{C}}\\v^{\circ}_{\mathcal{S}}
\end{bmatrix} + \Sigma\lambda_{m_r}\\
&\left\|\lambda_{m_r}\right\|_2^2\leq m_r \quad\textrm{or}\quad P\lambda_{m_r}\leq m_r\bm{1},
\end{align*}
\end{subequations}
depending on if the uncertainty is ellipsoidal or polyhedral. The set $\vec{\mathcal{Z}}_{\textrm{A}}^{\eta}$ consists of (\ref{eq:mzmrptapp}), $\mathcal{U}_{m_r}$, and the constraint
\begin{align*}
z_{\textrm{A}}^{\eta} &= m_zz + m_r \xi^{\eta}  + \Xi^{\eta} \Theta^{\eta}u_r^{\eta}.
\end{align*}

\subsection{Zonotopes and zonogons}\label{app:zono}

A zonotope in $\mathbb{R}^n$ can be written
\[
\left\{\left.c+\sum_{i=1}^p\beta_ig_i\;\right|\; \beta\in[-1,1]^n\right\},
\]
where $c\in\mathbb{R}^n$ is the center and $g_i\in\mathbb{R}^n$, $i=1,...,p$, are the generators. We refer to this form as the zonotope's G-representation. The Minkowski sum of two zonotopes with centers $c_1$ and $c_2$ and generators $g_{1i}$, $i=1,...,p_1$ and $g_{2i}$, $i=1,...,p_2$, respectively, is a zonotope with center $c_1+c_2$ and both sets of generators. We refer the reader to~\cite{ziegler1995lectures} for more coverage of zonotopes.

A zonogon is a two-dimensional zonotope. We now give an algorithm with for quickly obtaining the vertices of a zonogon from its G-representation. The algorithm is based on Algorithm 3 in~\cite{girard2008zonotope}, which is related to the gift wrapping algorithm~\cite{jarvis1973identification}. Without loss of generality, we assume that
\begin{itemize}
\item all generators are pointing up,
\item there are no coincident generators,
\item and the generators are sorted by angle from nearest to zero to nearest to $\pi$.
\end{itemize}
A zonogon with $p$ generators has $2p$ vertices. Given the center and generators of a zonogon, we can efficiently obtain its vertices as follows.
\begin{enumerate}
\item Let $v_1 = c-\sum_{i=1}^pg_i$. This is the bottommost vertex.
\item For $i=2,...,p+1$, set $v_i = v_{i-1} + 2g_{i-1}$. $v_{p+1}$ is the topmost vertex.
\item For $i=2,...,p$, set $v_{p+i} = v_{p+i-1} - 2g_{i-1}$.
\end{enumerate}
The vertices of the zonogon are $v_i$, $i=1,...,2p$. This algorithm has $2p$ steps, each of which consists of vector addition.

\section{Auxiliary signal computation}\label{app:Farkas}

\subsection{Dual systems}\label{app:dualsystems}

We describe how to construct the dual systems in Lemma~\ref{lem:Farkas}, $\mathcal{S}^{\eta_1,\eta_2}(\Delta)$, $(\eta_1,\eta_2)\in\mathbb{F}_2$. One way to proceed is by writing the relaxation $\tilde{\mathcal{P}}^{\eta_1,\eta_2}(\Delta)$ in standard form; in the polyhedral case, this is $Ax=b,\; x\geq0$, and Farkas' lemma specifies that $\mathcal{S}^{\eta_1,\eta_2}(\Delta)$ is $A^{\top}y\geq0,\;b^{\top}y<0$. When optimizing, we'd typically use $b^{\top}y\leq-1$ in place of the strict inequality. Alternatively, it is often easier to derive the dual of an optimization with zero objective and feasible set $\tilde{\mathcal{P}}^{\eta_1,\eta_2}(\Delta)$, in which case $\mathcal{S}^{\eta_1,\eta_2}(\Delta)$ consists of the dual feasible set plus a constraint that the dual objective be greater than or equal to one. Complex variables and constraints can be handled by separating into real and imaginary parts, or by simply using complex multipliers and the conjugate transpose for complex constraints~\cite{craven1973duality}.

Below we list the constraints that make up $\mathcal{S}^{\eta_1,\eta_2}(\Delta)$ when the uncertainty is polyhedral and the relaxation is $\mathcal{W}^{\textrm{Rel},3}$ from Section~\ref{sec:convex:relax}. We first do so for when $\eta_1$ and $\eta_2$ are both faults, and then for normal operation, $\textrm{N}$, and a fault. We write a new variable to the left of each bilinear constraint, which we equate with the left hand side in the algorithm in Appendix~\ref{app:admm}.

When $\eta_1$ and $\eta_2$ both correspond to faults, $\mathcal{S}^{\eta_1,\eta_2}(\Delta)$ is:
\begin{align*}
\chi^{\eta_1,\eta_2}:\;&  \textrm{Re}\left[\begin{bmatrix}
i^{\circ}_{\mathcal{C}}(\Delta)\\v^{\circ}_{\mathcal{S}}
\end{bmatrix}^{\dag}\left(\Gamma^{\eta_1}-\Gamma^{\eta_2}\right)^{\dag}\alpha\right]\\
&\quad -\sum_{\eta\in\{\eta_1,\eta_2\}} \bm{1}^{\top}\phi^{\eta} + \mu_{z\textrm{u}}^{\eta}+ \mu_{r\textrm{u}}^{\eta}-\underbar{m}_z \mu_{z\textrm{l}}^{\eta}\geq1\\
& \sum_{\eta\in\{\eta_1,\eta_2\}} \beta^{\eta}\psi^{\eta}=0 \\
& \textrm{Re}\left[\Sigma^{\dag}\left(\gamma_z^{\eta_1} + \gamma_r^{\eta_1}+\Gamma^{\eta_1\dag}\alpha\right)\right] +     P^{\top}\phi^{\eta_1}=0 \\
& \textrm{Re}\left[ \Sigma^{\dag}\left(\gamma_z^{\eta_2} + \gamma_r^{\eta_2}-\Gamma^{\eta_2\dag}\alpha\right)\right] +     P^{\top}\phi^{\eta_2}=0 \\
&\hspace{-1cm}\textrm{for }\eta\in\{\eta_1,\eta_2\}:\\
\chi_{z,\eta}^{\eta_1,\eta_2}:\;& \textrm{Re}\left[\begin{bmatrix}
i^{\circ}_{\mathcal{C}}(\Delta)\\v^{\circ}_{\mathcal{S}}
\end{bmatrix}^{\dag}\left(\gamma_z^{\eta}+\sigma_z^{\eta}\right)\right] -\bm{1}^{\top}\tau_z^{\eta}+\mu_{z\textrm{u}}^{\eta} -\mu_{z\textrm{l}}^{\eta}  =0  \\
\chi_{r,\eta}^{\eta_1,\eta_2}:\;&\textrm{Re}\left[\begin{bmatrix}
i^{\circ}_{\mathcal{C}}(\Delta)\\v^{\circ}_{\mathcal{S}}
\end{bmatrix}^{\dag}\left(\gamma_r^{\eta}+\sigma_r^{\eta}\right)\right] - \bm{1}^{\top}\tau_r^{\eta}+ \mu_{r\textrm{u}}^{\eta} -\mu_{r\textrm{l}}^{\eta}  =0 \\
& \beta^{\eta}\Omega_z^{\eta\dag} - \gamma_z^{\eta} - \sigma_z^{\eta}=0 \\
& \beta^{\eta}\Omega_r^{\eta\dag} - \gamma_r^{\eta} - \sigma_r^{\eta}=0 \\
&  \textrm{Re}\left[ \Sigma^{\dag}\sigma_z^{\eta}\right] + P^{\top}\tau_z^{\eta}  = 0\\
& \textrm{Re}\left[ \Sigma^{\dag}\sigma_r^{\eta}\right] + P^{\top}\tau_r^{\eta} = 0\\
&\phi^{\eta},\tau_z^{\eta},\tau_r^{\eta},\mu_{z\textrm{l}}^{\eta}, \mu_{z\textrm{u}}^{\eta}, \mu_{r\textrm{l}}^{\eta}, \mu_{r\textrm{u}}^{\eta}\geq0
\end{align*}

For $\eta\in\mathbb{F}\setminus{\textrm{N}}$, $\mathcal{S}^{\textrm{N},\eta}(\Delta)$ is:
\begin{align*}
\chi^{\textrm{N},\eta}:\;&\textrm{Re}\left[\begin{bmatrix}
i^{\circ}_{\mathcal{C}}(\Delta)\\v^{\circ}_{\mathcal{S}}
\end{bmatrix}^{\dag}\left(\left(\Gamma^{\textrm{N}}-\Gamma^{\eta}\right)^{\dag}\alpha+\zeta^{\textrm{N}}\right)\right]\\
&\quad- \bm{1}^{\top}\left(\phi^{\textrm{N}} + \phi^{\eta}\right) - \mu_{z\textrm{u}}^{\eta}- \mu_{r\textrm{u}}^{\eta} +\underbar{m}_z \mu_{z\textrm{l}}^{\eta} \geq1\\
& \epsilon + \beta^{\eta}\psi^{\eta\top}=0\\
& \Omega^{\textrm{N}\dag}\epsilon - \zeta^{\textrm{N}} = 0 \\
& \textrm{Re}\left[ \Sigma^{\dag}\left(\Gamma^{\textrm{N}\dag}\alpha + \zeta^{\textrm{N}} \right)\right] +     P^{\top}\phi^{\textrm{N}}=0 \\
& \textrm{Re}\left[ \Sigma^{\dag}\left(\gamma_z^{\eta} + \gamma_r^{\eta} - \Gamma^{\eta\dag}\alpha\right)\right] +     P^{\top}\phi^{\eta}=0 \\
\chi_{z}^{\textrm{N},\eta}:\;& \textrm{Re}\left[\begin{bmatrix}
i^{\circ}_{\mathcal{C}}(\Delta)\\v^{\circ}_{\mathcal{S}}
\end{bmatrix}^{\dag}\left(\gamma_z^{\eta}+\sigma_z^{\eta}\right)\right]  -\bm{1}^{\top}\tau_z^{\eta}+\mu_{z\textrm{u}}^{\eta} -\mu_{z\textrm{l}}^{\eta}  =0  \\
\chi_{r}^{\textrm{N},\eta}:\;& \textrm{Re}\left[\begin{bmatrix}
i^{\circ}_{\mathcal{C}}(\Delta)\\v^{\circ}_{\mathcal{S}}
\end{bmatrix}^{\dag}\left(\gamma_r^{\eta}+\sigma_r^{\eta}\right)\right]  -\bm{1}^{\top}\tau_r^{\eta}+\mu_{r\textrm{u}}^{\eta} -\mu_{r\textrm{l}}^{\eta} =0 \\
& \beta^{\eta}\Omega_z^{\eta\dag} - \gamma_z^{\eta} - \sigma_z^{\eta}=0 \\
& \beta^{\eta}\Omega_r^{\eta\dag} - \gamma_r^{\eta} - \sigma_r^{\eta}=0 \\
&  \textrm{Re}\left[ \Sigma^{\dag}\sigma_z^{\eta}\right] + P^{\top}\tau_z^{\eta}  = 0\\
& \textrm{Re}\left[ \Sigma^{\dag}\sigma_r^{\eta}\right] + P^{\top}\tau_r^{\eta} = 0\\
& \phi^{\textrm{N}},\phi^{\eta},\tau_z^{\eta},\tau_r^{\eta},\mu_{z\textrm{l}}^{\eta}, \mu_{z\textrm{u}}^{\eta}, \mu_{r\textrm{l}}^{\eta}, \mu_{r\textrm{u}}^{\eta}\geq0
\end{align*}

\subsection{Alternating Direction Method of Multipliers}\label{app:admm}

We use the ADMM to solve (\ref{opt:AS2}), a bilinear or biconvex program. We sketch our implementation here, and refer the reader to~\cite{boyd2011distributed} for a thorough presentation of the ADMM.

We split the variables into three groups: $\Delta$, the auxiliary signal vector; $\Upsilon_1$, a vector of all variables that multiply $\Delta$; and $\Upsilon_2$, a vector of all remaining variables that make up $\mathcal{S}^{\eta_1,\eta_2}(\Delta)$, $(\eta_1,\eta_2)\in\mathbb{F}_2$. We alternate between optimizing over $\Delta$ and $\Upsilon_1$, and leave $\Upsilon_2$ variable in both optimizations. The residual vector, $\mathcal{R}$, is comprised of $\chi^{\eta_1,\eta_2}$,  $\chi_{z,\eta}^{\eta_1,\eta_2}$ and $\chi_{r,\eta}^{\eta_1,\eta_2}$, $(\eta_1,\eta_2)\in\mathbb{F}_2$, $\eta\in(\eta_1,\eta_2)$. Let $\Pi$ be a vector of the same dimension as $\mathcal{R}$. The scaled augmented Lagrangian is
\[
\mathcal{L}(\Delta,\Upsilon_1,\Upsilon_2,\Pi) = \Delta^{\dag}Q\Delta+ \rho\left\|
\mathcal{R} + \Pi
\right\|_2^2,
\]
where $\rho$ is the penalty parameter. Denote the current iteration with superscript $\alpha$. The ADMM routine is as follows.
\begin{enumerate}
\item \textit{Initialization.} Set $\alpha=0$, $\Pi^0=\bm{0}$, and $\Delta^0=\Delta_0$.
\item \textit{Minimization over $\Upsilon_1$.} Solve
\begin{align*}
\min_{\Upsilon_1,\Upsilon_2} \quad& \mathcal{L}\left(\Delta^{\alpha},\Upsilon_1,\Upsilon_2,\Pi^{\alpha}\right)\\
\textrm{such that}\quad& \mathcal{S}^{\eta_1,\eta_2}\left(\Delta^{\alpha}\right)\neq\emptyset,\quad (\eta_1,\eta_2)\in\mathbb{F}_2.
\end{align*}
Set $\Upsilon_1^{\alpha+1}$ to the optimal value of $\Upsilon_1$.
\item \textit{Minimization over $\Delta$.} Solve
\begin{align*}
\min_{\Delta,\Upsilon_2} \quad& \mathcal{L}\left(\Delta,\Upsilon_1^{\alpha+1},\Upsilon_2,\Pi^{\alpha}\right)\\
\textrm{such that}\quad& \mathcal{S}^{\eta_1,\eta_2}(\Delta)\neq\emptyset,\quad (\eta_1,\eta_2)\in\mathbb{F}_2.
\end{align*}
Set $\Delta^{\alpha+1}$ to the optimal value of $\Delta$.
\item \textit{Dual update.} Set
\[
\Pi^{\alpha+1} = \Pi^{\alpha} + \mathcal{R}^{\alpha+1}.
\]
\item \textit{Termination check.} If a termination criterion is not satisfied, set $\alpha\leftarrow\alpha+1$ and go to Step 2.
\end{enumerate}

Potential termination criteria include convergence of the objective, $\Delta^{\alpha\dag}Q\Delta^{\alpha}$, to a fixed value; convergence of the residual, $\mathcal{R}^{\alpha}$, to $\bm{0}$; and convergence of both the objective and residual.

Observe that if the initial auxiliary signal is $\Delta^0=\bm{0}$ and the residual is zero after Step 2 in the first iteration, then the models are already separated, and no auxiliary signal is needed.

\end{document}